\documentclass[12pt]{amsart}
\usepackage{amsfonts,latexsym,rawfonts,amsmath,amssymb,amsthm,times,a4wide}
\usepackage{graphicx}
\usepackage[plainpages=false]{hyperref}
\usepackage{mathrsfs}
\usepackage{enumerate}
\usepackage{bbm}

\numberwithin{equation}{section}

\usepackage{amscd}
\usepackage{eufrak}
\usepackage{euscript}
\usepackage{epsfig}
\usepackage{array}
\usepackage{enumerate}
\usepackage{color}
\usepackage{wasysym}
\usepackage{pdfsync}
\usepackage{stmaryrd}

\numberwithin{equation}{section}

\newcommand{\beq}{\begin{equation}}
\newcommand{\eeq}{\end{equation}}
\newcommand{\beqs}{\begin{eqnarray*}}
\newcommand{\eeqs}{\end{eqnarray*}}
\newcommand{\beqn}{\begin{eqnarray}}
\newcommand{\eeqn}{\end{eqnarray}}
\newcommand{\beqa}{\begin{array}}
\newcommand{\eeqa}{\end{array}}

\newtheorem{prop}{Proposition}[section]
\newtheorem{theo}[prop]{Theorem}
\newtheorem{lem}[prop]{Lemma}

\newtheorem{cor}[prop]{Corollary}
\newtheorem{rem}[prop]{Remark}

\newtheorem{defi}[prop]{Definition}

\newcommand{\tm}{\begin{theo}}
\newcommand{\tmd}{\end{theo}}
\newcommand{\co}{\begin{cor}}
\newcommand{\cod}{\end{cor}}
\newcommand{\prp}{\begin{prop}}
\newcommand{\prpd}{\end{prop}}

\newcommand{\R}{{\mathbb R}}
\newcommand{\N}{{\mathbb N}}

\newcommand{\T}{\mathcal{T}}
\newcommand{\wt}{\widetilde}

\begin{document}

\title[Combinatorial Ricci flows with applications to the hyperbolization of cusped 3-manifolds]{Combinatorial Ricci flows with applications to the hyperbolization of cusped 3-manifolds}

\author{Ke Feng}
\address{Ke Feng: School of Mathematical Sciences, University of Electronic Science and Technology of China; No.2006, Xiyuan Ave, West Hi-Tech Zone, Chengdu, Sichuan, 611731, P.R.China}
\email{youyouguzhe@126.com}

\author{Huabin Ge}
\address{Huabin Ge: School of Mathematics, Renmin University of China, Beijing, 100872, P.R. China}
\email{hbge@ruc.edu.cn}

\author{Bobo Hua}
\address{Bobo Hua: School of Mathematical Sciences, LMNS,
Fudan University, Shanghai 200433, China; Shanghai Center for
Mathematical Sciences, Fudan University, Shanghai 200433,
China.}
\email{bobohua@fudan.edu.cn}

\begin{abstract}{
In this paper, we adopt combinatorial Ricci curvature flow methods to study the existence of cusped hyperbolic structure on 3-manifolds with torus boundary.
For general pseudo 3-manifolds, we prove the long-time existence and the uniqueness for the extended Ricci flow for decorated hyperbolic polyhedral metrics.
We prove that the extended Ricci flow converges to a decorated hyperbolic polyhedral metric if and only if there exists a decorated hyperbolic polyhedral metric of zero Ricci curvature.
If it is the case, the flow converges exponentially fast. These results apply for cusped hyperbolic structure on 3-manifolds via ideal triangulation.}

\end{abstract}

\maketitle


\section{Introduction}
{We continue our program to study the hyperbolization of 3-manifolds using combinatorial Ricci flow methods. In the earlier paper \cite{[FGH]}, we obtained a unique complete hyperbolic structure and a geometric triangulation on a compact 3-manifolds whose boundary components have genus at least 2, under the assumption that there is a triangulation with edge valences at least 10. In this paper, we consider
cusped hyperbolic 3-manifolds, which are noncompact complete hyperbolic 3-manifolds with cusp ends and finite volume. 
Given a compact 3-manifold $N$ with torus boundary components, we consider the problem whether there is a cusped hyperbolic structure on $N-\partial N.$ Given an ideal triangulation for $N,$ we show that the following are equivalent: the existence of a cusped hyperbolic structure compatible with the triangulation and the convergence of the combinatorial Ricci flow.}


In the seminar work \cite{[T]}, Thurston introduced 2-dimensional circle packings to construct hyperbolic manifolds or orbifolds. Chow and Luo \cite{[BL]} introduced a combinatorial Ricci flow for triangulated surfaces. For a compact triangulated surface of non-positive Euler characteristic, they proved that the Ricci flow with any initial circle packing metric exists for all times and converges exponentially fast to Koebe-Andreev-Thurston's circle packing metrics. To extend Thurston's circle packings to higher dimensional cases, Cooper and Rivin \cite{[CR]} studied ball packings of 3-manifolds. Inspired by the work of Chow and Luo, Glickenstein \cite{[Gl1],[Gl2]} introduced a combinatorial Yamabe flow based on Euclidean triangulations defined by ball packings. 
The second author, Jiang and Shen \cite{[GJS]} studied the convergence of Glickenstein's Yamabe flow for regular ball packings in Euclidean background geometry.  The second and third authors \cite{[GH]} studied the combinatorial Yamabe flow for triangulations of closed 3-manifolds in hyperbolic background geometry and proved the convergence of the flow under some combinatorial assumption.

We recall the setting of 3-dimensional triangulated spaces.
Let $\{T_1,\cdots, T_t
\},$ $t\in \N$, be a finite collection of combinatorial tetrahedra and $\mathscr{T}$ be the disjoint union $T_1 \sqcup\cdots \sqcup T_t,$ which is a simplicial complex.
The quotient space $(M,\mathcal{T})=\mathscr{T}/\sim,$ via a family of affine isomorphisms pairing faces of
tetrahedra in $\mathscr{T}$, is called a \emph{compact pseudo 3-manifold} $M$ together with a triangulation $\mathcal{T}.$ Note that $\mathcal{T}$ consists of equivalent classes of simplexes in $\mathscr{T}.$ {Pseudo 3-manifolds are very general concepts, which include manifolds with triangulation as special cases.}
$M$ is called a \emph{closed pseudo 3-manifold} if each codimension-$1$ face of tetrahedra in $\mathscr{T}$ is identified with another codimension-$1$ face. We denote by $V=V(\T)$ (resp. $E=E(\mathcal{T})$) the set of vertices (resp. edges) in $\mathcal{T}.$ We define the \emph{valence of an edge} $e\in E,$ denoted by $d_e$, to be the number of edges in
$\mathscr{T}$ in the equivalent class of $e.$


An ideal tetrahedron in $\mathbb{H}^3$ is a convex hull of $4$ points $\{v_1,v_2,v_3,v_4\}$ on $\partial\mathbb{H}^3$ in generic position. A \emph{decorated ideal tetrahedron} defined by Penner \cite{[P]} is a pair $(s, \{H_1, H_2, H_3, H_{4}\})$ where $s$ is an ideal tetrahedron and $H_i$ is an 2-horosphere centered at the vertex $v_i$ for any $1\leq i\leq 4.$ For each edge $e = v_iv_j$ in a decorated ideal tetrahedron we assign the signed length $l_{ij}$ as follows: the absolute value $|l_{ij}|$ of the length is the distance between $H_i \cap e$ and  $H_j \cap e$, $l_{ij} > 0$ if $H_i$ and $H_j$ are disjoint, and $l_{ij} \le 0$ if $H_i \cap H_j \neq \emptyset$. Codimension-$1$ faces of decorated ideal tetrahedron are decorated ideal triangles in totally geodesic hyperbolic planes.

\begin{defi}\label{def:t1} A \emph{decorated hyperbolic polyhedral metric}, \emph{decorated metric} in short, on a closed pseudo 3-manifold $(M, \T)$ is obtained by replacing each tetrahedra in $\T$ by a decorated ideal tetrahedron and replacing the affine gluing homeomorphisms by isometries preserving the decoration, i.e. gluing decorated ideal tetrahedra along codimension-$1$ faces. We denote by $l=(l(e_1), \dots, l(e_m))$, where $E = \{e_1, \dots, e_m\},$ the (signed) edge length vector of a decorated metric, and by $\mathscr{L}(M, \mathcal{T})\subset \R^E$ the set of all decorated metrics on $(M, \mathcal{T})$ parametrized by the edge length vector $l$. The \emph{Ricci curvature} $K_e(l)$ of the metric $l$ assigned to each edge $e$ is $2\pi$ minus the cone angle at the edge $e,$  i.e. the total dihedral angle surrounding the edge $e.$ We write $K(l)=(K_{e_1}(l), \dots, K_{e_m}(l)).$\end{defi}


The main purpose is to find decorated metrics $l$ with no singularity on edges, i.e. $K_e(l)=0$ for all $e\in E,$ {called \emph{zero-curvature decorated metrics}.} We recall the motivation for the above construction in the literature, see e.g. \cite{[L],[LY]}. Suppose that $N$ is a compact 3-manifold with boundary. {The purpose is to find a hyperbolic metric of finite volume on $N - \partial N$ with cusp ends.} Let $C(N)$ be the compact 3-space obtained by coning off each boundary component of $N$ to a point. In particular, if $N$ has $k$ boundary components, then there are exactly $k$ cone points $\{v_1,...,v_k\}$ in $C(N)$ so that $C(N) - \{v_1,...,v_k\}$ is homeomorphic to $N - \partial N.$ An \emph{ideal triangulation} $\T$ of $N$ is a triangulation $\T$
of $C(N)$ such that the vertices of the triangulation are exactly the cone points
$\{v_1,...,v_k\}.$ By Moise \cite{[M]}, every such compact 3-manifold $N$ can be ideally triangulated. In our terminology, $(C(N), \T)$ is a closed pseudo 3-manifold
 and $N$ is homeomorphic to $C(N)-st(v_1,...,v_k),$ where $st(v_1,...,v_k)$ is the
open star of the vertices $\{v_1,...,v_k\}$ in the second barycentric subdivision of the
triangulation $\T.$ {As in Definition~\ref{def:t1}, we can endow $(C(N), \T)$ with various decorated metrics. If there is a decorated metric  $l\in \mathscr{L}(C(N), \mathcal{T})$ with zero Ricci curvature at edges, then we obtain a hyperbolic metric on $N - \partial N$ with cusp ends via gluing these ideal tetrahedra. We call this a \emph{cusped hyperbolic structure} on $N$ associated with the ideal triangulation $\T.$



Motivated by Chow and Luo \cite{[BL]} and Luo \cite{[L]}, for a compact 3-manifold with boundary equipped with ideal triangulation, {or more generally a closed pseudo 3-manifolds $(M,\T),$ we consider the following combinatorial Ricci flow $l(t)\in \mathscr{L}(M,\T)$  to study the existence of hyperbolic metrics,
\begin{equation}\label{eq:luoflow}\frac{d }{dt}l(t)=K(l(t)),\quad\forall t\geq0.\end{equation} This flow was first proposed by Luo \cite{[L]} in the hyper-ideal setting and further studied in the decorated setting by \cite{TianyuYang}. 
This is a negative gradient flow of a local convex function on $\mathscr{L}(M,\T),$ related to the so-called co-volume function. The difficulty is that the domain $\mathscr{L}(M,\T)$ is not convex in $\R^E.$

To circumvent the difficulty, Luo and Yang \cite{[LY]} extended the set of decorated metrics to a general framework. A \emph{generalized decorated tetrahedron} is a (topological) tetrahedron of vertices $\{v_1, . . . , v_4\}$ so
that each edge $v_iv_j$ is assigned a real number $l_{ij}=l_{ji},$ $1\leq i\neq j\leq 4,$ called the (signed) length. 
The space of
all generalized decorated tetrahedra parameterized by the edge length vectors $l = (l_{12} , . . . , l_{34})$ is $\R^6$.
For a generalized decorated tetrahedron, Luo and Yang \cite{[LY]} defined the \emph{extended dihedral angles}, still denoted by ${\alpha}_{ij}:\R^6\to\R$, extending dihedral angles for decorated ideal tetrahedra, which turns out to be continuous functions of the edge lengths $l_{ij};$ see Section~\ref{sec:pre}. 


\begin{defi}\label{defi:ghi}  Let $(M,\T)$ be a closed pseudo 3-manifold. We call any $l\in \R^E$ a \emph{generalized decorated hyperbolic polyhedral metric}, \emph{generalized decorated metric} in short, on $(M,\T),$ which replaces each tetrahedron in $\T$ by a generalized decorated ideal tetrahedron with (signed) edge lengths given by $l.$ The \emph{generalized Ricci curvature} of an edge $e,$ denoted by $\wt{K}_e(l),$ is defined similarly as in Definition~\ref{def:t1} by using extended dihedral angles on edges, and we write the generalized Ricci curvature vector as $\wt{K}(l).$ \end{defi}

We say that $l\in \R^E$ is a \emph{zero-curvature generalized decorated metric} if $\wt{K}(l)=0.$
The \emph{extended Ricci flow} $l(t)\in \R^E$ is defined as follows,
\begin{equation}\label{exkl}\frac{d }{dt}l(t)=\wt{K}(l(t)),\quad\forall t\geq0.\end{equation}
We prove the long-time existence and the uniqueness of the extended Ricci flow.
\begin{theo}\label{UNI}
For a closed pseudo 3-manifold $(M, \mathcal{T})$ and any initial generalized decorated metric $l_0\in\R^E,$ there exists a unique solution $\{l(t)|t\in [0,\infty)\}\subset \R^E$ to the extended Ricci flow \eqref{exkl}.
\end{theo}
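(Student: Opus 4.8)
The plan is to read the extended Ricci flow \eqref{exkl} as an autonomous first-order ODE system $\dot l = \wt K(l)$ on the whole Euclidean space $\R^E$, and to obtain existence, uniqueness, and the absence of finite-time blow-up by combining the standard ODE theory with two features of the generalized curvature: its regularity and its global boundedness. The point of passing to the Luo--Yang extension is precisely that $\wt K$ is now defined on all of $\R^E$ (a convex domain), so no issue of the flow leaving a restricted parameter region can arise.

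First I would settle local existence and uniqueness. By Definition~\ref{defi:ghi} each component $\wt K_e(l)$ equals $2\pi$ minus the finite sum of the extended dihedral angles $\alpha_{ij}$ contributed by the tetrahedra incident to $e$. Using that the extended angle functions $\alpha_{ij}\colon\R^6\to\R$ are not merely continuous but continuously differentiable on the whole space (the extension constructed in Section~\ref{sec:pre}), the composition $\wt K\colon\R^E\to\R^E$ is $C^1$, hence locally Lipschitz. The Picard--Lindel\"of theorem then furnishes, for every initial datum $l_0\in\R^E$, a unique maximal solution $l(t)$ on a half-open interval $[0,T_{\max})$ with $0<T_{\max}\le\infty$.

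Next I would upgrade this to a global solution via a uniform velocity bound. The key soft input is that the Luo--Yang extension is valued in a bounded range, each extended dihedral angle satisfying $\alpha_{ij}\in[0,\pi]$. Since there are exactly $d_e$ such angles surrounding an edge $e$, this gives
\[
2\pi - d_e\pi \le \wt K_e(l) \le 2\pi \qquad \text{for all } l\in\R^E,
\]
so $\|\wt K(l)\|$ is bounded by a constant $C=C(M,\T)$ independent of $l$. If $T_{\max}<\infty$, integrating \eqref{exkl} yields $\|l(t)-l_0\|\le C\,t\le C\,T_{\max}$ for all $t\in[0,T_{\max})$, so $l(t)$ stays in a fixed compact set; as $\dot l=\wt K(l)$ is uniformly bounded, $l$ is uniformly Lipschitz in $t$ and $\lim_{t\to T_{\max}^-}l(t)$ exists in $\R^E$. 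Using this limit as a new initial value and reapplying the local theory extends the solution past $T_{\max}$, contradicting maximality. Hence $T_{\max}=\infty$, giving the asserted long-time existence, with uniqueness inherited from the local statement.

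The main obstacle is the regularity input in the second step: establishing that the extended angles are genuinely $C^1$ (or at least locally Lipschitz) \emph{across the degenerate configurations}, where ideal tetrahedra collapse and the classical dihedral angles are undefined. It is this differentiability of the extension, rather than bare continuity, that makes Picard--Lindel\"of applicable and therefore yields uniqueness and not merely existence; should only continuity be available, one would instead have to exploit the gradient (maximal-monotone) structure of $\wt K$ to recover uniqueness. By comparison, the global bound on $\wt K$ is elementary, relying solely on the confinement of dihedral angles to $[0,\pi]$, and it is exactly this boundedness that precludes any finite-time escape to infinity and produces the flow for all $t\ge 0$.
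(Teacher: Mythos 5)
There is a genuine gap in your main line of argument: the regularity claim on which you hang Picard--Lindel\"of is false. The extended dihedral angles of Luo--Yang are only \emph{continuous} on $\R^6$, not $C^1$ and not even locally Lipschitz across the degenerate configurations. Concretely, the angle of a generalized Euclidean triangle with side lengths $x_1,x_2,x_3$ opposite to $x_1$ is $\arccos\bigl((x_2^2+x_3^2-x_1^2)/(2x_2x_3)\bigr)$ on the non-degenerate region, and since $\arccos$ fails to be Lipschitz at $-1$, the partial derivatives of the angle blow up like $(x_2+x_3-x_1)^{-1/2}$ as the triangle degenerates; the constant extension by $\pi$ and $0$ therefore glues only continuously. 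This is consistent with what the paper actually records: Lemma~\ref{F} asserts that the potential $F$ is $C^1$ (so the angles, being its first derivatives, are merely $C^0$), and Section~\ref{sec:pre} claims only continuity of $\alpha_{ij}$. Consequently $\wt K$ is not locally Lipschitz, Picard--Lindel\"of does not apply, and your primary route yields neither local uniqueness nor, strictly speaking, local existence as you set it up.

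You do flag this as ``the main obstacle'' and name the correct repair in passing, and that repair is exactly the paper's proof: existence comes from Peano's theorem (continuity of $\wt K$ suffices), globality comes from the uniform bound $|\wt K|\le C(M,\T)$ exactly as in your third paragraph (this part of your argument is fine and matches the paper), and uniqueness comes from the gradient structure $\wt K=-\nabla\wt H$ with $\wt H$ convex and $C^1$: for two solutions $l,\hat l$ with the same initial data, $h(t)=|l(t)-\hat l(t)|^2$ satisfies $h'(t)=-2\,(l-\hat l)\cdot(\nabla\wt H(l)-\nabla\wt H(\hat l))\le 0$ by monotonicity of the gradient of a convex function, forcing $h\equiv 0$. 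To make your write-up into a proof you must promote this fallback to the main argument and delete the $C^1$ claim; as it stands, the central step rests on a property the extension does not have.
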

The following are main results of the paper.
\begin{theo}\label{thm:main1}
For a closed pseudo 3-manifold $(M, \mathcal{T}),$ let $\{l(t)\}_{t \ge 0}$ be a solution to the extended Ricci flow \eqref{exkl}.
\begin{enumerate}
\item If there is no zero-curvature generalized decorated metrics, then $l(t)$ diverges to infinity in subsequence, i.e. there exists a subsequence $t_n\to\infty$ such that $|l(t_n)|\to \infty, n\to\infty.$
\item There is a zero-curvature decorated metric if and only if $l(t)$ converges to a zero-curvature decorated metric. If it is the case, the convergence is exponentially fast.
\end{enumerate}
\end{theo}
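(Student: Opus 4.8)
The plan is to exploit the variational structure of the extended flow, reducing everything to soft gradient-flow theory once the right structural input is in place. First I would record the key facts about the extended curvature map, which come from Luo--Yang \cite{[LY]} and the preliminaries in Section~\ref{sec:pre}: the extended dihedral angles are continuous on $\R^6$, the Jacobian $(\partial \wt{K}_e/\partial l_{e'})_{e,e'}$ is symmetric, and it is negative semidefinite. Symmetry means that $\sum_e \wt{K}_e\,dl_e$ is a closed $1$-form on the simply connected space $\R^E$, hence equals $d\wt{\mathcal{V}}$ for a $C^1$ function $\wt{\mathcal{V}}$ with $\nabla\wt{\mathcal{V}}=\wt{K}$; negative semidefiniteness makes $\wt{\mathcal{V}}$ concave, so that \eqref{exkl} is the gradient ascent $\dot{l}=\nabla\wt{\mathcal{V}}$ of a concave function. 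Two bookkeeping facts complete the setup. There is a gauge invariance of $\wt{K}$ under the $|V|$-dimensional translation group $\Gamma$ generated by the vertex vectors $w^{(i)}$ (moving the horosphere at $v_i$ shifts the lengths of all edges at $v_i$), whence $\ker\mathrm{Hess}\,\wt{\mathcal{V}}\supseteq\Gamma$; the relevant rigidity statement, also from the extension, is that $\mathrm{Hess}\,\wt{\mathcal{V}}$ is negative \emph{definite} transverse to $\Gamma$. Moreover the combinatorial Gauss--Bonnet identity $\sum_{e\ni v_i}\wt{K}_e=2\pi\chi(L_i)$ holds for all $l\in\R^E$ (because the extended angles at each vertex of a generalized tetrahedron still sum to $\pi$), where $L_i$ is the link of $v_i$. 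In particular a zero-curvature metric can exist only when every link is a torus, and then $\wt{K}$ takes values in $\Gamma^{\perp}$.

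For part (1) I would run a Lyapunov argument. Along the flow $\frac{d}{dt}\wt{\mathcal{V}}(l(t))=\nabla\wt{\mathcal{V}}\cdot\dot{l}=|\wt{K}(l(t))|^2\ge 0$. Suppose, for contradiction, that $\{l(t)\}$ is bounded. Then $\wt{\mathcal{V}}\circ l$ is nondecreasing and, by continuity of $\wt{\mathcal{V}}$ on the bounded trajectory, bounded above, hence convergent; consequently $\int_0^\infty|\wt{K}(l(t))|^2\,dt<\infty$, so there is a sequence $t_n\to\infty$ with $\wt{K}(l(t_n))\to 0$. Passing to a convergent subsequence $l(t_n)\to l_\infty$ and using continuity of $\wt{K}$ gives $\wt{K}(l_\infty)=0$, a zero-curvature generalized decorated metric, contradicting the hypothesis. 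Hence $\{l(t)\}$ is unbounded, which is exactly subsequential divergence to infinity.

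For part (2) the nontrivial direction assumes a zero-curvature decorated metric $l^*$ and must produce exponential convergence (the converse is immediate). Since $\wt{K}(l^*)=0$ forces every link to be a torus, $\wt{K}\in\Gamma^{\perp}$, so the flow preserves the affine slice $\Omega:=l(0)+\Gamma^{\perp}$. By gauge invariance and concavity the full zero set $\{\wt{K}=0\}$ is the single coset $l^*+\Gamma$ (the maximum set of $\wt{\mathcal{V}}$), which meets $\Omega$ in exactly one point $\bar{l}$, itself a genuine decorated metric because $\Gamma$ preserves the honest locus. To show $l(t)\to\bar{l}$, set $\phi(t)=\tfrac12|l(t)-\bar{l}|^2$; monotonicity of $\nabla\wt{\mathcal{V}}=\wt{K}$ together with $\wt{K}(\bar{l})=0$ gives $\dot{\phi}=(l-\bar{l})\cdot(\wt{K}(l)-\wt{K}(\bar{l}))\le 0$, so the trajectory is bounded. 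The argument of part (1) then furnishes a subsequence converging to a zero-curvature metric, which lies in the closed slice $\Omega$ and therefore equals $\bar{l}$; since $|l(t)-\bar{l}|$ is monotone it must tend to $0$, giving $l(t)\to\bar{l}$. Finally, the negative definiteness of $\mathrm{Hess}\,\wt{\mathcal{V}}$ transverse to $\Gamma$ yields a spectral gap $\lambda>0$ and a local strong-monotonicity estimate $(\wt{K}(l)-\wt{K}(\bar{l}))\cdot(l-\bar{l})\le-\lambda|l-\bar{l}|^2$ on $\Omega$ near $\bar{l}$, whence $\dot{\phi}\le-2\lambda\phi$ for large $t$ and $|l(t)-\bar{l}|$ decays exponentially.

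The main obstacle is concentrated entirely in the structural input rather than in the dynamics. One must know that the extended curvature map admits a \emph{globally} defined, $C^1$, concave potential on all of $\R^E$ whose Hessian degenerates exactly along the decoration directions $\Gamma$. The global concavity is precisely what circumvents the non-convexity of the honest domain $\mathscr{L}(M,\T)$ flagged in the introduction, and the identification of the kernel of the Hessian (equivalently, rigidity transverse to the decoration) is what promotes subsequential convergence to genuine exponential convergence. Establishing these ingredients---continuity of the extended angles across the boundary of the genuine locus, the Schläfli-type symmetry $\partial\alpha_{ij}/\partial l_{kl}=\partial\alpha_{kl}/\partial l_{ij}$, and the negative (semi)definiteness with the correct kernel---is the heart of the matter and is supplied by the Luo--Yang extension; granting it, the convergence dichotomy above follows formally from the gradient-flow structure of a concave function.
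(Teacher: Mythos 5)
Your proposal is correct and rests on the same variational skeleton as the paper: the flow is the (negative) gradient flow of the convex $C^1$ potential $\wt H(l)=cov(l)-2\pi\sum_i l_i$ (your $\wt{\mathcal V}=-\wt H$), part (1) is the identical Lyapunov/mean-value (or integrability) argument, and the exponential rate comes from strict convexity transverse to the gauge directions at the honest critical point. The genuine divergence is in part (2), in how you get compactness of the trajectory and upgrade subsequential to full convergence. The paper restricts to the slice $\R^E/\widehat{\R^V}$ via Proposition~\ref{prop:inv}, invokes the properness of $\wt H$ on that slice (Proposition~\ref{Prop:pp1}(2)/Theorem~\ref{thm:l1}, a nontrivial Luo--Yang input) to trap the orbit in a compact set, extracts a subsequence converging to the unique zero-curvature metric $\hat l$ by rigidity, and then appeals to Lyapunov's attractor theorem near $\hat l$. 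You instead observe that $\tfrac{d}{dt}\tfrac12|l(t)-\bar l|^2=(l-\bar l)\cdot(\wt K(l)-\wt K(\bar l))\le 0$ by monotonicity of $\nabla\wt H$ --- exactly the inequality the paper already uses to prove uniqueness in Theorem~\ref{UNI}, here applied against the stationary solution $\bar l$. This gives boundedness for free, bypasses the properness theorem entirely, and, because $|l(t)-\bar l|$ is monotone, converts the subsequential limit into full convergence without any attractor theorem; the spectral-gap estimate $\dot\phi\le-2\lambda\phi$ near $\bar l$ then gives the rate directly. That is a cleaner and more self-contained route, at the price of needing the flow to preserve the affine slice $l(0)+\Gamma^\perp$ (your Gauss--Bonnet observation, equivalent to the paper's Proposition~\ref{prop:inv}) and of compressing the rigidity statement: your claim that $\{\wt K=0\}$ is a single coset does require the full argument of Theorem~\ref{thm:rigidity} (convexity makes all critical points global minimizers, and strict convexity transverse to $\widehat{\R^V}$ at the interior honest point kills any flat direction outside the gauge), since the Hessian is only known to be definite transverse to $\Gamma$ on the open locus $\mathscr L(M,\T)$, not on all of $\R^E$; you name the right ingredients, so this is a presentational compression rather than a gap.
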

\begin{rem} \begin{enumerate}[(i)]
\item The first assertion gives a necessary condition for the non-existence result of zero-curvature generalized decorated metrics.
\item By the second assertion, the extended Ricci flow provides an effective method for numerically computing zero-curvature decorated metrics.
\end{enumerate}
\end{rem}

{Note that a finite cusped 3-manifold with an ideal triangulation is a closed pseudo 3-manifold. The following theorem is a consequence of Theorem~\ref{thm:main1}.
\begin{theo}\label{thm:main2}
For a finite cusped 3-manifold $N$ with an ideal triangulation $\mathcal{T}$, let $\{l(t)\}_{t \ge 0}$ be a solution to the extended Ricci flow \eqref{exkl}.
\begin{enumerate}
\item If there is no cusped hyperbolic structure on $N$ associated with $\T,$ then $l(t)$ diverges to infinity in subsequence, i.e. there exists a subsequence $t_n\to\infty$ such that $|l(t_n)|\to \infty, n\to\infty.$
\item There is a cusped hyperbolic structure on $N$ so that $\T$ is isotopic to a geometric triangulation if and only if $l(t)$ converges to a decorated metric.
Moreover, the flow converges exponentially fast in this case, and the cusped hyperbolic structure on $N$ is unique by the famous Mostow-Prasad rigidity theorem.
\end{enumerate}
\end{theo}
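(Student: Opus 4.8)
The plan is to derive Theorem~\ref{thm:main2} from Theorem~\ref{thm:main1} by translating between cusped hyperbolic structures on $N$ and zero-curvature decorated metrics on the coned-off space. First I would set up the reduction: as recalled in the introduction, $(C(N),\T)$ is a closed pseudo 3-manifold, so Theorem~\ref{UNI} produces a unique solution $\{l(t)\}_{t\ge0}\subset\R^E$, $E=E(\T)$, of \eqref{exkl}, and Theorem~\ref{thm:main1} applies verbatim. The key dictionary, also from the introduction, is that a genuine decorated metric $l\in\mathscr{L}(C(N),\T)$ with $K(l)=0$ glues the decorated ideal tetrahedra into a complete finite-volume hyperbolic metric on $N-\partial N$ realizing $\T$ by geodesic ideal tetrahedra, i.e.\ making $\T$ (isotopic to) a geometric triangulation; conversely, a geometric ideal triangulation of a cusped hyperbolic $N$, decorated by horospheres at the cusps, assigns to the edges $\lambda$-lengths forming a zero-curvature decorated metric, the equations $K_e=0$ being exactly the closing-up of dihedral angles around each edge. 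Hence ``$N$ carries a cusped hyperbolic structure associated with $\T$ with $\T$ geometric'' is equivalent to ``$\mathscr{L}(C(N),\T)$ contains a zero-curvature decorated metric.''

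Granting this dictionary, the second assertion follows from Theorem~\ref{thm:main1}(2). If such a structure exists, then a zero-curvature decorated metric exists, and Theorem~\ref{thm:main1}(2) gives $l(t)\to l_\infty$ exponentially fast with $l_\infty$ a zero-curvature decorated metric, in particular a decorated metric. For the converse, suppose $l(t)\to l_\infty\in\mathscr{L}(C(N),\T)$. By continuity of $\wt K$ the velocity $\tfrac{d}{dt}l(t)=\wt K(l(t))$ converges to $\wt K(l_\infty)$; since $l(t)$ itself converges, this limiting velocity must vanish, so $\wt K(l_\infty)=0$, and as $l_\infty$ is a genuine decorated metric this reads $K(l_\infty)=0$. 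Thus $l_\infty$ is a zero-curvature decorated metric and yields a cusped hyperbolic structure with $\T$ geometric. Its uniqueness is not a statement about the flow but a direct invocation of Mostow--Prasad rigidity, $N-\partial N$ being a finite-volume complete hyperbolic 3-manifold.

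The first assertion is where I expect the genuine difficulty. In contrapositive form, Theorem~\ref{thm:main1}(1) says that if $l(t)$ fails to diverge subsequentially---equivalently, if the trajectory stays bounded---then $\R^E$ contains a zero-curvature \emph{generalized} decorated metric. The hypothesis of assertion (1), however, only denies the existence of a zero-curvature \emph{genuine} decorated metric, so literally applying Theorem~\ref{thm:main1}(1) leaves open the intermediate scenario in which the flow stays bounded and converges to a non-genuine zero-curvature generalized decorated metric while no genuine one exists. Ruling out this scenario is the main obstacle, and I would isolate it as a realization lemma: \emph{on $(C(N),\T)$ a zero-curvature generalized decorated metric, if one exists, is automatically a genuine decorated metric} (such a metric being unique by convexity of the functional underlying Theorem~\ref{thm:main1}, so that existence forces existence of a genuine one). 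The plan for the lemma is to use the geometric meaning of Luo--Yang's extended dihedral angles (Section~\ref{sec:pre}): around each edge $e$ the genuine manifold $C(N)$ presents an honest cyclic chain of tetrahedra, and one must show that the extended angles of a generalized tetrahedron that fails to be a non-degenerate ideal tetrahedron cannot sum to $2\pi$ around every such chain unless, in fact, all incident tetrahedra are genuine. With the lemma in hand, ``no cusped hyperbolic structure associated with $\T$'' upgrades to ``no zero-curvature generalized decorated metric,'' and Theorem~\ref{thm:main1}(1) then yields the asserted subsequential divergence. I regard the angle bookkeeping behind this genuineness lemma---in particular excluding cancellations among extended angles of opposite sign---as the most delicate and essential step of the whole argument.
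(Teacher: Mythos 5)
Your reduction and your treatment of assertion (2) coincide with the paper's: the paper's entire proof of Theorem~\ref{thm:main2} is the single sentence ``This follows from Theorem~\ref{thm:main1}; the other statements are well-known results in hyperbolic geometry,'' and your dictionary between zero-curvature decorated metrics on $(C(N),\T)$ and cusped hyperbolic structures with $\T$ geometric, together with the deduction of (2) from Theorem~\ref{thm:main1}(2) and Mostow--Prasad rigidity, is exactly the intended argument spelled out. (Your observation that a convergent trajectory has vanishing limiting velocity is a slight variant of the paper's Proposition~\ref{prop:zero}, which gets the same conclusion via the mean value theorem; both are fine.)

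For assertion (1) you have put your finger on a real issue rather than introduced one: Theorem~\ref{thm:main1}(1) is conditioned on the nonexistence of zero-curvature \emph{generalized} decorated metrics, while the hypothesis of Theorem~\ref{thm:main2}(1) only excludes a \emph{genuine} zero-curvature decorated metric, and the paper's one-line citation does not bridge the two. The bridge you propose --- that on $(C(N),\T)$ a zero-curvature generalized decorated metric must be genuine (equivalently, that a bounded trajectory accumulating at a degenerate generalized metric is impossible) --- is precisely what is needed, but in your write-up it remains a stated plan: the ``angle bookkeeping'' excluding degenerate tetrahedra whose extended angles still close up to $2\pi$ around every edge is exactly the hard content, it is not supplied by Luo--Yang's rigidity theorem (Theorem~\ref{thm:rigidity} assumes a genuine solution already exists), and it is not obviously true without further combinatorial or topological input. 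So as it stands your proof of (1) is incomplete at its central step. A safe repair that stays within what the paper actually proves is to weaken the hypothesis of (1) to ``there is no zero-curvature generalized decorated metric on $(C(N),\T)$,'' after which Theorem~\ref{thm:main1}(1) applies verbatim; proving your realization lemma would be a genuine strengthening of the paper's result, not a routine verification.
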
}

The key ingredient of the proof is that the extended Ricci flow is the negative gradient flow of a convex function on $\R^E,$ related to the co-volume function. The function appeared before in Cohn, Kenyon, and Propp \cite{CKP}, and Bobenko, Pinkahl and Springborn \cite{[BPB]}, and  Luo and Yang \cite{[LY]}.

A closed pseudo 3-manifold $(M,\T)$ is called \emph{edge-transitive} if for any edges $e,\hat{e}\in E,$ there exists an automorphism of the triangulation $\T$ which maps $e$ to $\hat{e}.$ This yields that $d_e=d_{\hat{e}}$ for any $e,\hat{e}\in E.$ We obtain the following corollary of Theorem~\ref{thm:main1}.
\begin{cor}\label{cor:main} Let $(M,\T)$ be an  edge-transitive closed pseudo 3-manifold, and $d=d_e$ for some $e\in E.$
\begin{enumerate}
\item If $d\neq 6,$ then there exists no zero-curvature generalized decorated metrics, and the extended Ricci flow $l(t)$
diverges to infinity in subsequence.
\item If $d=6,$ then there exists a zero-curvature decorated metric, and the extended Ricci flow
$l(t)$ converges to a zero-curvature decorated metric exponentially fast.
\end{enumerate}
\end{cor}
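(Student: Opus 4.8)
The plan is to reduce both assertions to the convergence dichotomy of Theorem~\ref{thm:main1} by means of a combinatorial Gauss--Bonnet identity for the total generalized Ricci curvature. First I would record the angle-sum relation for a single (generalized) decorated tetrahedron: at each of its four ideal vertices the three (extended) dihedral angles on the incident edges sum to $\pi$, whence the six extended dihedral angles $\alpha_{ij}(l)$ of \emph{any} generalized decorated tetrahedron satisfy $\sum_{i<j}\alpha_{ij}(l)=2\pi$ for all $l\in\R^6$. For honest decorated ideal tetrahedra this is just the statement that each ideal vertex link is a Euclidean triangle; for the extended regime I would invoke the corresponding property of the Luo--Yang extension, and this is the one external input the argument relies on.

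Summing $\wt{K}_e(l)$ over all edges then telescopes: summing the $d_e$ dihedral angles about each edge and then over all edges amounts to summing all six extended dihedral angles over each of the $t$ tetrahedra, which contributes $2\pi t$ by the identity above. Hence, for every $l\in\R^E$,
\begin{equation*}
\sum_{e\in E}\wt{K}_e(l)=2\pi|E|-2\pi t.
\end{equation*}
Using $\sum_{e\in E}d_e=6t$ (each tetrahedron has six edges) together with edge-transitivity, which forces $d_e\equiv d$ and hence $t=d|E|/6$, this becomes $\sum_{e\in E}\wt{K}_e(l)=\tfrac{\pi}{3}\,|E|\,(6-d)$, independent of $l$. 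For assertion (1), if $d\neq 6$ this total is nonzero, so no $l\in\R^E$ can satisfy $\wt{K}(l)=0$; that is, there are no zero-curvature generalized decorated metrics, and Theorem~\ref{thm:main1}(1) yields a subsequence $t_n\to\infty$ with $|l(t_n)|\to\infty$.

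For assertion (2), when $d=6$ I would exhibit an explicit zero-curvature \emph{decorated} metric using the symmetry of the complex. Take the regular ideal tetrahedron with a symmetric decoration; since the dihedral angles of an ideal tetrahedron depend only on its shape (not on the decoration) and are all equal by symmetry, the identity above forces each to equal $\pi/3$. Let $c$ be the common signed edge length of this symmetric decoration and set $l(e)\equiv c$; because all faces are congruent decorated ideal triangles, the gluing is consistent and $l$ is a genuine element of $\mathscr{L}(M,\mathcal{T})$. Its curvature at each edge is $\wt{K}_e=2\pi-d\cdot\tfrac{\pi}{3}=2\pi-6\cdot\tfrac{\pi}{3}=0$. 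Having produced a zero-curvature decorated metric, Theorem~\ref{thm:main1}(2) gives that the extended Ricci flow converges, exponentially fast, to such a metric.

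The only genuinely nontrivial ingredient is the angle-sum identity for the \emph{extended} dihedral angles; everything else is a counting argument composed with the already-established dichotomy. I would therefore concentrate the care on confirming that the Luo--Yang extension preserves $\sum_{i<j}\alpha_{ij}=2\pi$ on all of $\R^6$, since assertion (1) rules out generalized (not merely honest) zero-curvature metrics and so needs the identity precisely in the degenerate regime where it is least obvious.
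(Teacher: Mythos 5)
Your argument is correct, and for assertion (1) it takes a genuinely different route from the paper. You prove a combinatorial Gauss--Bonnet identity: since the extended dihedral angles of any generalized decorated tetrahedron are the angles of a generalized Euclidean triangle taken with multiplicity two, Lemma~\ref{F} gives $\sum_{i<j}\alpha_{ij}(l)=2\pi$ on all of $\R^6$ (this is exactly the ``external input'' you flag, and it does hold in the degenerate regime), whence $\sum_{e\in E}\wt{K}_e(l)=2\pi\bigl(|E|-t\bigr)=\tfrac{\pi}{3}|E|(6-d)$ for every $l\in\R^E$; for $d\neq 6$ this constant is nonzero and zero curvature is impossible. The paper instead argues by contradiction through a symmetrization: it uses edge-transitivity plus the fact that the zero-curvature set is the (convex) set of minimizers of the convex functional $\wt{H}$ to average a hypothetical solution into a constant-length zero-curvature decorated metric, whose angles are all $\pi/3$, giving $K_e=2\pi-d\pi/3\neq 0$. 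Your identity is arguably cleaner and slightly more general --- it only needs the average edge valence $6t/|E|$ to differ from $6$, not full edge-transitivity --- whereas the paper's averaging argument showcases the convexity structure of the solution set that it uses elsewhere. For assertion (2) the two proofs coincide: a constant length vector lies in $\mathscr{L}(M,\T)$ (each tetrahedron's link triangle is equilateral, so the triangle inequalities hold and no separate gluing check is needed), all quad angles are $\pi/3$, the curvature vanishes when $d=6$, and Theorem~\ref{thm:main1}(2) gives exponential convergence.
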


The paper is organized as follows. In Section \ref{sec:pre}, we collect basic facts on
the geometry of decorated ideal tetrahedra, including volume and co-volume functions, etc. In Section 3, we study the extended Ricci flow and prove the main results of the paper.

\bigskip
\noindent {\bf Acknowledgements}
We thank Yi Liu, Feng Luo, Jiming Ma, Tian Yang for many discussions on related problems in this paper. 

H. G. is supported by NSFC, no. 11871094. B. H. is supported by NSFC, no.11831004 and no. 11926313. F. K. is supported by NSFC, no. 11901009.



\section{Preliminaries}\label{sec:pre}


We recall some results on decorated ideal tetrahedra; see Luo and Yang \cite{[LY]}.
An ideal tetrahedron in $\mathbb{H}^3$ is determined up to isometry by its six dihedral angles on edges. These angles satisfy the condition that angles at opposite edges are the same and the sum of all angles is $2\pi$. Thus the set of all ideal tetrahedra modulo isometry can be identified with $ \{(a, b, c) \in \mathbb{R}^3_{> 0}| a + b + c =\pi\}$.

Let $(s, \{H_1, H_2, H_3, H_{4}\})$ be a decorated ideal tetrahedron with signed edge length $l_{ij}=l_{ji},$ where $s$ is an ideal tetrahedron $\{v_1,v_2,v_3,v_4\}$ and $H_i,$ $1\leq i\leq 4,$ are 2-horospheres centered at $v_i.$ Note that for any $i,$ $s \cap H_i$ is isometric to a Euclidean triangle.


The following result is well-known; see e.g. \cite[Lemma~2.5]{Luo2011} or \cite[Lemma 4.2.3]{[BPB]}
\begin{lem}
Suppose that ${l_{ij}}$ are the edge lengths of a decorated ideal tetrahedron $\sigma=(s, \{H_1, H_2, H_3, H_{4}\})$. Then all four Euclidean triangles $\{s\cap H_i\}$ are similar to the Euclidean triangle $\tau$ of edge lengths $e^{(l_{ij} + l_{kh})/2},e^{(l_{ik} + l_{jh})/2},$ and $e^{(l_{ih} + l_{jk})/2},$ so that the triangle inequalities hold, i.e.
$$
e^{(l_{ij} + l_{kh})/2} + e^{(l_{ik} + l_{jh})/2} > e^{(l_{ih} + l_{jk})/2},
$$
for \{i,j,k,h\} = \{1,2,3,4\}. The dihedral angle $\alpha_{ij}$ of $\sigma$ at the edge $v_iv_j$ is equal to the inner angle of $\tau$ opposite to the edge of length $e^{(l_{ij} + l_{kh})/2}$. Conversely, if $(l_{12}, \dots, l_{34}) \in \mathbb{R}^6$ satisfies the above triangle inequalities, then there is a unique decorated ideal tetrahedron having $l_{ij}$ as the lengths. 
\end{lem}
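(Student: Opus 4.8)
The plan is to work in the upper half-space model $\mathbb{H}^3=\{(z,t):z\in\C,\ t>0\}$ with ideal boundary $\C\cup\{\infty\}$, and to reduce everything to Euclidean geometry in $\C$ by normalizing the vertex $v_4$ to $\infty$ via an isometry. Then $H_4$ becomes a horizontal plane $\{t=h\}$, the edges $v_iv_4$ become vertical rays, and $v_1,v_2,v_3$ become points $z_1,z_2,z_3\in\C$ carrying horospheres $H_i$ that are Euclidean spheres of diameter $d_i$ tangent to $\C$ at $z_i$. The two formulas I need are: (a) along the vertical ray $v_iv_4$ the signed distance between the plane $H_4$ and the sphere $H_i$ is $l_{i4}=\log(h/d_i)$; and (b) Penner's formula for two finite horospheres, $e^{l_{ij}/2}=|z_i-z_j|/\sqrt{d_id_j}$. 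The latter I would obtain as a Möbius-invariant quantity, e.g. by sending $z_j\to\infty$ and tracking how the Euclidean diameter transforms (or by passing to the light-cone model). Signs are encoded automatically, since the logarithms may be negative.

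The geometric core is the identification of the section $s\cap H_4$. Restricting $\tfrac{|dz|^2+dt^2}{t^2}$ to $\{t=h\}$ gives the metric $h^{-2}|dz|^2$, so $s\cap H_4$ is the Euclidean triangle $z_1z_2z_3$ rescaled by $h^{-1}$; combining (a) and (b) its side joining $z_i,z_j$ (with $i,j\in\{1,2,3\}$) has length $|z_i-z_j|/h=e^{(l_{ij}-l_{i4}-l_{j4})/2}$. Multiplying all three sides by the common factor $e^{(l_{14}+l_{24}+l_{34})/2}$ — a similarity — turns them into $e^{(l_{12}+l_{34})/2},\,e^{(l_{13}+l_{24})/2},\,e^{(l_{14}+l_{23})/2}$, i.e. exactly the three numbers $e^{(l_{ij}+l_{kh})/2}$ attached to the opposite edge-pairs. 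Thus $s\cap H_4$ is similar to $\tau$, and the triangle inequalities are forced because $s\cap H_4$ is a genuine Euclidean triangle. Since these three numbers are symmetric under permuting which vertex is sent to $\infty$, the same argument applied to $v_1,v_2,v_3$ shows all four sections $s\cap H_i$ are similar to the same $\tau$.

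For the dihedral angles I would use that, with $v_4=\infty$, the two faces meeting along $v_iv_4$ are totally geodesic half-planes lying above the Euclidean lines $\overline{z_iz_j}$ and $\overline{z_iz_k}$ in $\C$; hence the dihedral angle $\alpha_{i4}$ equals the Euclidean angle of the triangle $z_1z_2z_3$ at $z_i$, which in $\tau$ is the inner angle opposite the side of length $e^{(l_{i4}+l_{jk})/2}$, as claimed. Sending each of the other vertices to $\infty$ in turn yields the statement for every edge, and the symmetry $e^{(l_{ij}+l_{kh})/2}=e^{(l_{kh}+l_{ij})/2}$ recovers the equality of dihedral angles at opposite edges.

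Finally, for the converse I would reverse the construction: given $(l_{12},\dots,l_{34})\in\R^6$ obeying the triangle inequalities, the three numbers $e^{(l_{ij}+l_{kh})/2}$ realize a Euclidean triangle $\tau$. Normalizing $v_4=\infty$ and $h=1$ forces $d_i=e^{-l_{i4}}$ and the mutual distances $|z_i-z_j|=e^{(l_{ij}-l_{i4}-l_{j4})/2}$, which are proportional to the sides of $\tau$ and hence again satisfy the triangle inequality; placing $z_1,z_2,z_3$ accordingly (uniquely up to a Euclidean isometry, i.e. an isometry of $\mathbb{H}^3$ fixing $\infty$) and decorating with the spheres $H_i$ of diameter $d_i$ together with $H_4=\{t=1\}$ produces a decorated ideal tetrahedron with exactly the prescribed signed lengths; since every choice was forced, this gives both existence and uniqueness. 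I expect the main obstacle to be the clean derivation of formula (b) with correct sign bookkeeping, together with checking that the similarity class $\tau$ is genuinely intrinsic — independent of which vertex is placed at $\infty$ and of the height $h$ — both of which are resolved by the manifest symmetry of the expressions $e^{(l_{ij}+l_{kh})/2}$.
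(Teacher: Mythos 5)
Your proof is correct, but note that the paper does not actually prove this lemma: it is quoted as a well-known result with references to Luo and to Bobenko--Pinkall--Springborn, so there is no in-paper argument to compare against. Your upper half-space computation is the standard route taken in those references, and every step checks out: with $v_4=\infty$, $H_4=\{t=h\}$, and $H_i$ a sphere of diameter $d_i$ tangent at $z_i$, one indeed has $l_{i4}=\log(h/d_i)$ and Penner's relation $e^{l_{ij}/2}=|z_i-z_j|/\sqrt{d_id_j}$ (the sign convention is consistent, since $|z_i-z_j|^2=d_id_j$ is exactly the tangency condition), which gives side lengths $|z_i-z_j|/h=e^{(l_{ij}-l_{i4}-l_{j4})/2}$ for $s\cap H_4$ and hence similarity to $\tau$ after rescaling by $e^{(l_{14}+l_{24}+l_{34})/2}$; the identification of the dihedral angle along a vertical edge with the Euclidean angle at $z_i$, and the converse by forcing $d_i$ and the mutual distances, are likewise sound. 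Two small points worth making explicit in a written version: (i) $s\cap H_i$ should be read as the intersection of $H_i$ with the three faces (equivalently, the cone at $v_i$), so that it is a genuine triangle independently of how large the horosphere is — your computation with vertical half-planes over the lines $\overline{z_iz_j}$ implicitly does this; (ii) in the converse, the strict triangle inequalities guarantee that $z_1,z_2,z_3$ are non-collinear, hence that $\{z_1,z_2,z_3,\infty\}$ is in generic position and the convex hull is a nondegenerate ideal tetrahedron. Neither is a gap, just bookkeeping.
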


The set of all decorated ideal tetrahedra can be parametrized by the edge length as follows, $$\mathscr{L}:=\{(l_{12},\cdots, l_{34} ) \in \R^6: e^{(l_{ij} + l_{kh})/2} + e^{(l_{ik} + l_{jh})/2} > e^{(l_{ih} + l_{jk})/2},\ \{i,j,k,h\}\ \mathrm{distinct}\}.$$ 
Luo and Yang \cite{[LY]} extended it to the set of generalized decorated ideal tetrahedra $\{(l_{12},\cdots, l_{34} ) \in \R^6\}.$ A generalized decorated ideal tetrahedron is characterized by the length vector $(l_{12},\cdots, l_{34} ) \in \R^6.$  To define dihedral angles and the volume of a generalized decorated tetrahedron, we need to use the notion of generalized Euclidean triangles and their angles. A generalized Euclidean triangle is a (topological) triangle of vertices $\{v_1, v_2, v_3\}$ so that each edge is assigned a positive number; let $x_i$ be the assigned length of the edge $v_jv_k$ where $\{i, j, k\} = \{1, 2, 3\}$. The inner angle $a_i = a_i(x_1, x_2, x_3)$ at the vertex $v_i$ is the inner angle of the Euclidean triangle of edge lengths $x_1, x_2, x_3$ opposite to the edge of length $x_i$, if the triangle inequalities hold; or $a_i = \pi, a_j = a_k = 0$, if $x_i \ge x_j + x_k$.

\begin{lem}{(Luo \cite{[L1]})}\label{F}
The angle function $a_i(x_i, x_j, x_k): \mathbb{R}^3_{> 0} \to [0, \pi]$ is continuous so that $a_1 + a_2 + a_3 = \pi$ and the $C^0$ - smooth differential 1-form $\sum^3_{i = 1} a_id(\ln x_i)$ is closed on $\mathbb{R}^3_{> 0}$. Furthermore, for $u_i = \ln x_i$, the integral $F(u) = \int^u_0 \sum^3_{i = 1} a_idu_i$ is a $C^1$ - smooth convex function in $(u_1, u_2, u_3)$ on $\mathbb{R}^3$ so that $F$ is strictly convex when restricted to $\{u \in \mathbb{R}^3 | u_1 + u_2 + u_3 = 0, e^{u_i} + e^{u_j} > e^{u_k}\}$ and $F(u + (k, k, k)) = F(u) + k\pi$ for all $k \in \mathbb{R}$.
\end{lem}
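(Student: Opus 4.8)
The plan is to treat the four assertions---continuity together with $a_1+a_2+a_3=\pi$, closedness of the one-form, $C^1$-smoothness and convexity of $F$, and the homogeneity $F(u+(k,k,k))=F(u)+k\pi$---in turn, with the heart of the matter being an explicit computation of the Jacobian of the angle map on the nondegenerate region.

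First I would establish continuity and the angle sum. On the open set where all three triangle inequalities hold strictly, $a_i$ is an honest Euclidean angle given by the law of cosines $\cos a_i = (x_j^2+x_k^2-x_i^2)/(2x_jx_k)$, so $a_1+a_2+a_3=\pi$ is classical and $a_i$ depends real-analytically on $(x_1,x_2,x_3)$. As $x_i\uparrow x_j+x_k$ one has $\cos a_i\to -1$, i.e. $a_i\to\pi$ and $a_j,a_k\to 0$, which matches the prescribed degenerate values; hence $a_i$ extends continuously across the degenerate hypersurface $\{x_i=x_j+x_k\}$, and $\pi+0+0=\pi$ persists there. On the region $\{x_i>x_j+x_k\}$ the angles are locally constant. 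This yields continuity of $a_i:\mathbb{R}^3_{>0}\to[0,\pi]$ and $a_1+a_2+a_3=\pi$ everywhere.

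The key computation is the Jacobian of $u\mapsto(a_1,a_2,a_3)$ in the nondegenerate region, where $u_i=\ln x_i$. Differentiating the law of cosines and simplifying with the law of sines, I expect to obtain
\[
\frac{\partial a_i}{\partial u_j}=-\cot a_k,\qquad \frac{\partial a_i}{\partial u_i}=\cot a_j+\cot a_k,
\]
for $\{i,j,k\}=\{1,2,3\}$, the diagonal entries following from the off-diagonal ones via the constraint $\sum_i da_i=0$. In particular $(\partial a_i/\partial u_j)$ is symmetric, so the $C^0$ one-form $\omega=\sum_i a_i\,du_i$ is closed on the nondegenerate open set; on $\{x_i>x_j+x_k\}$ one has $\omega=\pi\,du_i$, visibly closed. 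Thus $F(u)=\int_0^u\omega$ is well defined and path-independent, with $\nabla F=(a_1,a_2,a_3)$; since the $a_i$ are continuous, $F$ is $C^1$. Its Hessian is exactly the symmetric matrix above, and grouping terms gives the cotangent identity
\[
\xi^{\top}(\mathrm{Hess}\,F)\,\xi=\cot a_1\,(\xi_2-\xi_3)^2+\cot a_2\,(\xi_3-\xi_1)^2+\cot a_3\,(\xi_1-\xi_2)^2,
\]
which is twice the Dirichlet energy of the affine function on the triangle with vertex values $\xi_1,\xi_2,\xi_3$; being a Dirichlet energy it is nonnegative even when one angle is obtuse, so $\mathrm{Hess}\,F\ge 0$ and $F$ is convex. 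On the slice $\{u_1+u_2+u_3=0,\ e^{u_i}+e^{u_j}>e^{u_k}\}$ the triangle is nondegenerate, so the null space of the Dirichlet form is precisely $\mathrm{span}(1,1,1)$; since the tangent space of the slice is $\{\sum_i\xi_i=0\}$, which meets $\mathrm{span}(1,1,1)$ only at $0$, the restricted Hessian is positive definite and $F$ is strictly convex there. Finally, the $a_i$ depend only on the ratios $x_i/x_j$, hence on the differences $u_i-u_j$, so they are invariant under $u\mapsto u+(k,k,k)$; therefore $\tfrac{d}{dk}F(u+(k,k,k))=\sum_i a_i=\pi$, and integrating gives $F(u+(k,k,k))=F(u)+k\pi$.

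I expect the main obstacle to be the loss of smoothness across the degenerate locus $\{x_i=x_j+x_k\}$, where $\omega$ is only $C^0$ and $F$ only $C^1$, so closedness and global convexity cannot be read off from a Hessian computation directly. For closedness I would argue that the degenerate set is a finite union of smooth hypersurfaces of measure zero, off which $\omega$ is smooth and closed; since $\omega$ is continuous, the integral over any loop can be approximated by loops avoiding this set and hence vanishes, giving path-independence of $F$. For convexity I would use that $F$ is $C^1$ on all of $\mathbb{R}^3$ with positive semidefinite Hessian on the dense open nondegenerate region; the first-order characterization $F(v)\ge F(u)+\nabla F(u)\cdot(v-u)$ then holds on that dense region and extends by continuity of $F$ and $\nabla F$ to all of $\mathbb{R}^3$.
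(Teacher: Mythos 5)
The paper does not prove this lemma at all: it is imported verbatim from Luo's \emph{Rigidity of polyhedral surfaces, III} and used as a black box, so there is no in-paper argument to compare yours against. Judged on its own terms, the core of your computation is correct and is the standard route: the law-of-cosines continuity argument across $\{x_i=x_j+x_k\}$, the Jacobian $\partial a_i/\partial u_j=-\cot a_k$, $\partial a_i/\partial u_i=\cot a_j+\cot a_k$, the identity $\xi^{\top}(\mathrm{Hess}\,F)\xi=\sum_k\cot a_k(\xi_i-\xi_j)^2$ interpreted as twice a Dirichlet energy (so that nonnegativity survives an obtuse angle, with kernel exactly $\mathrm{span}(1,1,1)$), and the translation identity via $\tfrac{d}{dk}F(u+k\mathbf{1})=\sum_i a_i=\pi$ are all right.

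The genuine gap is in both of your patches for crossing the degenerate locus. First, you cannot ``approximate the integral over any loop by loops avoiding'' the hypersurfaces $\{x_i=x_j+x_k\}$: a loop crossing a separating hypersurface transversally cannot be perturbed off it (the intermediate value theorem forces any nearby loop to cross as well), so that argument for closedness fails as stated. Second, the first-order inequality $F(v)\ge F(u)+\nabla F(u)\cdot(v-u)$ for $u,v$ in the nondegenerate region does not follow from positive semidefiniteness of the Hessian at points of that region, because the segment from $u$ to $v$ may leave it --- in $u$-coordinates the nondegenerate region is not convex --- so ``extends by continuity'' does not close the argument. Both are repaired by the same device: define $F$ piecewise, by the smooth formula on the closure of the nondegenerate region and by the affine function $\pi u_i+\mathrm{const}$ on each region $\{x_i\ge x_j+x_k\}$; the boundary values and gradients match (since $a_i\to\pi$, $a_j,a_k\to 0$), so the glued $F$ is $C^1$ on $\mathbb{R}^3$ with $dF=\omega$, which gives exactness (hence closedness) of $\omega$. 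For convexity, note that along any line $t\mapsto u+tv$ the derivative $h(t)=\nabla F(u+tv)\cdot v$ is continuous, nondecreasing on the nondegenerate pieces and constant on the degenerate ones, and the line either lies in one region or crosses each hypersurface at most twice (a three-term exponential sum has at most two zeros unless it vanishes identically); hence $h$ is nondecreasing and $F$ is convex.
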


For a generalized decorated tetrahedron of length vector $l = (l_{12}, \dots, l_{34}) \in \mathbb{R}^6$, the dihedral angle $\alpha_{ij}$ at the edge $v_iv_j$ is defined to be the inner angle of the generalized Euclidean triangle of edge lengths $e^{(l_{ij} + l_{kh})/2}$, $e^{(l_{ik} + l_{jh})/2}$ and $e^{(l_{ih} + l_{jk})/2}$ so that $\alpha_{ij}$ is opposite to the edge of length $e^{(l_{ij} + l_{kh})/2}$ for $i, j, k, h$ distinct. Hence the dihedral angles $\alpha_{ij}=\alpha_{kh},$ $\{i,j,k,h\} = \{1,2,3,4\},$ are continuous functions on $\R^6.$

We study the properties of volume functions and co-volume functions on generalized decorated ideal tetrahedra.
Recall that the Lobachevsky function is defined as $\Lambda(x)=\int_0^x \ln |2\sin t| dt.$
The volume of a generalized decorated tetrahedron $\{v_1,v_2,v_3,v_4\}$ of lengths $l_{ij}$, denoted by $vol(l)$, is defined to be $\frac{1}{2}\sum_{i < j} \Lambda (\alpha_{ij}(l))$, where $\alpha_{ij}$ is the extended dihedral angle at edge $e_{ij} = v_iv_j$. By this definition, $vol(l)$ is the hyperbolic volume of the underlying ideal tetrahedron if $l\in \mathscr{L};$ $vol(l)=0,$ otherwise. The co-volume of a generalized decorated tetrahedron of lengths $l_{ij}$ is defined as
$$cov(l):=2vol(l) + \sum_{1\leq i<j\leq 4}\alpha_{ij} l_{ij}.$$

\begin{prop}[\cite{[LY]}]\label{cov}
The co-volume function $cov: \mathbb{R}^6 \to \mathbb{R}$ is a $C^1$ - smooth convex function so that for any $1\leq i<j\leq 4,$
\beq
\frac{\partial cov(l)}{\partial l_{ij}} = \alpha_{ij}.
\eeq
\end{prop}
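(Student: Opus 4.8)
The plan is to realize $cov$ as the pullback, under a linear map, of twice the potential $F$ of Lemma~\ref{F}, so that the $C^1$-smoothness, the convexity, and the gradient formula are all inherited from Lemma~\ref{F}. To this end I first record that $cov$ factors through the linear surjection $\pi\colon\R^6\to\R^3$, $\pi(l)=(u_1,u_2,u_3)$ with $u_1=\tfrac12(l_{12}+l_{34})$, $u_2=\tfrac12(l_{13}+l_{24})$, $u_3=\tfrac12(l_{14}+l_{23})$, the half-sums over the three pairs of opposite edges. Indeed the cross-section triangle has edge lengths $e^{u_1},e^{u_2},e^{u_3}$, so every extended dihedral angle depends on $l$ only through $u$; writing $\alpha_{12}=\alpha_{34}=a_1(u)$, $\alpha_{13}=\alpha_{24}=a_2(u)$, $\alpha_{14}=\alpha_{23}=a_3(u)$ for the angle functions of Lemma~\ref{F}, one has $vol(l)=\sum_{p}\Lambda(a_p(u))$ and, using $\alpha_{ij}=\alpha_{kh}$ (where $\{i,j,k,h\}=\{1,2,3,4\}$ corresponds to $p$),
\[
\sum_{i<j}\alpha_{ij}l_{ij}=\sum_{p=1}^{3}a_p(u)\,(l_{ij}+l_{kh})=2\sum_{p=1}^{3}a_p(u)\,u_p .
\]
Thus $cov=\phi\circ\pi$ for some $\phi\colon\R^3\to\R$.

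The crux is the identity $\phi=2F+\mathrm{const}$, which I verify by comparing differentials on the open set $\pi(\mathscr{L})=\{u:e^{u_p}+e^{u_q}>e^{u_r}\}$ where the triangle inequalities hold and everything is smooth. Differentiating $vol=\sum_p\Lambda(a_p)$ and using the law of sines $2\sin a_p=e^{u_p}/R$ in the cross-section triangle, together with the constraint $\sum_p a_p\equiv\pi$ (whence $\sum_p da_p=0$, so the circumradius $R$ drops out), yields the Schl\"afli identity $2\,dvol=-\sum_{i<j}l_{ij}\,d\alpha_{ij}$. Feeding this into $d(cov)=2\,dvol+\sum_{i<j}(l_{ij}\,d\alpha_{ij}+\alpha_{ij}\,dl_{ij})$ cancels the $l_{ij}\,d\alpha_{ij}$ terms, leaving $d(cov)=\sum_{i<j}\alpha_{ij}\,dl_{ij}=\pi^{*}\!\big(2\sum_p a_p\,du_p\big)$; since $dF=\sum_p a_p\,du_p$ by Lemma~\ref{F}, this is exactly $d(2F\circ\pi)$. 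Hence $\phi$ and $2F$ have equal differential on the connected open set $\pi(\mathscr{L})$ and so differ there by a constant, and as both are continuous on all of $\R^3$ with $\pi(\mathscr{L})$ dense, the identity $\phi=2F+\mathrm{const}$ holds everywhere. Because $F$ is $C^1$ and convex on $\R^3$ and $\pi$ is linear, $cov=2F\circ\pi+\mathrm{const}$ is $C^1$ and convex on $\R^6$, and the chain rule gives $\partial cov/\partial l_{ij}=2(\partial F/\partial u_p)(\partial u_p/\partial l_{ij})=2a_p\cdot\tfrac12=\alpha_{ij}$.

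The main obstacle is the degenerate locus where the triangle inequalities fail: there $vol\equiv 0$, some $\alpha_{ij}\in\{0,\pi\}$, and the law-of-sines computation underlying the Schl\"afli identity is unavailable, so the gradient formula cannot be checked pointwise by the calculation above. This is precisely where Lemma~\ref{F} does the heavy lifting: rather than re-deriving the differential identity at degenerate configurations, I establish it only on the smooth region and then transport the global $C^1$-convexity of $\phi$ (hence of $cov$) from that of $F$ via the density-and-continuity argument. One should also confirm that $\pi(\mathscr{L})$ is connected so that the additive constant is unambiguous; this holds because it is the set of admissible Euclidean triangle shapes in the $u$-coordinates, a connected open subset of $\R^3$, and $\pi$ is a linear surjection with connected (affine) fibers.
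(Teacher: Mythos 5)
The paper offers no proof of this proposition to compare against: it is quoted directly from Luo--Yang \cite{[LY]}. Your strategy --- factoring $cov$ through the linear map $l\mapsto u$ onto the three half-sums over quads and identifying the resulting function $\phi$ of $u$ with $2F$ from Lemma~\ref{F} --- is exactly the mechanism behind the result in \cite{[LY]} and \cite{[BPB]}, and your Schl\"afli computation on the non-degenerate locus is correct. (One caveat on conventions: it requires the standard Milnor--Lobachevsky function $\Lambda(x)=-\int_0^x\ln|2\sin t|\,dt$; the paper's displayed definition omits the minus sign, under which your derivation would produce $2\,dvol=+\sum l_{ij}\,d\alpha_{ij}$ and the proposition would fail. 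You are implicitly, and correctly, using the standard convention.)

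There is, however, a genuine gap at the last step: $\pi(\mathscr{L})=\{u\in\R^3: e^{u_p}+e^{u_q}>e^{u_r}\}$ is \emph{not} dense in $\R^3$. For instance $u=(0,0,10)$ violates $e^{u_1}+e^{u_2}>e^{u_3}$ together with a whole open neighbourhood of it. So the continuity-plus-density argument does not propagate $\phi=2F+\mathrm{const}$ beyond $\overline{\pi(\mathscr{L})}$, and as written you have only established the proposition on $\mathscr{L}$ --- missing precisely the degenerate locus, which is the entire point of extending $cov$ to $\R^6$. The gap is repairable with tools you already have: on the open region where $e^{u_r}> e^{u_p}+e^{u_q}$ the extended angles are $a_r=\pi$, $a_p=a_q=0$, so $\phi(u)=2\Lambda(\pi)+4\Lambda(0)+2\pi u_r=2\pi u_r$ is affine with differential $2\pi\,du_r$, while Lemma~\ref{F} gives $2\,dF=2\pi\,du_r$ there as well. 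Hence $\phi-2F$ is continuous on $\R^3$ and locally constant off the three pairwise disjoint hypersurfaces $\{e^{u_p}+e^{u_q}=e^{u_r}\}$; each of the three degenerate components abuts $\pi(\mathscr{L})$ along its hypersurface, so continuity forces the same constant on all four components and $\phi=2F+\mathrm{const}$ holds globally. With that patch in place, your conclusions --- $C^1$-smoothness, convexity, and $\partial cov/\partial l_{ij}=\alpha_{ij}$ on all of $\R^6$ --- follow exactly as you state.
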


For any $\alpha\in \R^6$ satisfying $\alpha_{ij}=\alpha_{kh}$ and $\alpha_{ij}+\alpha_{ik}+\alpha_{ih}=\pi$ for $\{i,j,k,l\}=\{1,2,3,4\},$ we define
$$F_{\alpha}:\R^6\to \R,\quad l\mapsto F_{\alpha} (l)= cov(l) - \alpha \cdot l.$$
The group $\R^4$ acts on $\R^6$ as follows: for any $w\in\R^4, l\in \R^6,$
$$(w+l)_{ij}:= l_{ij}+w_i+w_j,\quad \forall\  1\leq i<j\leq 4.$$ Then one can show that
$F_\alpha$ is invariant under the action, i.e.
$$F_\alpha(w+l)=F_{\alpha}(l),\quad \forall\ l\in \R^6, w\in \R^4.$$
We denote by $\widehat{\R^4}:=\{w+0: w\in \R^4\}$ with $0\in \R^6,$ which is a 4-dimensional linear subspace of $\R^6,$ and by $\R^6/\widehat{\R^4}$ the orthogonal subspace of $\widehat{\R^4}$ w.r.t. the standard inner product. We prove the following result.
\begin{lem}\label{lem:convex} The co-volume function $cov: \mathbb{R}^6 \to \mathbb{R}$ is smooth on
$\mathscr{L}.$ The restriction function $cov|_{\R^6/\widehat{\R^4}}$ is strictly convex in $\mathscr{L}\cap \R^6/\widehat{\R^4}.$ In fact, for any $l\in \mathscr{L},$ the kernel of $\mathrm{Hess}(cov)|_l$ is $\widehat{\R^4}$ and $\mathrm{Hess}(cov)|_l$ is positive definite in $\R^6/\widehat{\R^4}.$
\end{lem}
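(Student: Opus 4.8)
The plan is to reduce the statement to the strict convexity of the single--triangle functional $F$ from Lemma~\ref{F}, by exhibiting the co-volume as $F$ pulled back along a fixed linear map, and then to carry out the linear algebra identifying the kernel.

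First I would introduce the linear map $L:\R^6\to\R^3$ recording the three pairs of opposite edges,
$$L(l)=\Big(\tfrac{l_{12}+l_{34}}2,\ \tfrac{l_{13}+l_{24}}2,\ \tfrac{l_{14}+l_{23}}2\Big)=:(u_1,u_2,u_3),$$
so that, by the very definition of the extended dihedral angles, $\alpha_{12}=\alpha_{34}=a_1(L(l))$, $\alpha_{13}=\alpha_{24}=a_2(L(l))$ and $\alpha_{14}=\alpha_{23}=a_3(L(l))$, where $a_i$ and $u_i=\ln x_i$ are exactly the angle function and log--length of Lemma~\ref{F}. On $\mathscr L$ the image $L(l)$ satisfies the strict triangle inequalities, so the $a_i$ (equivalently $F$) are smooth there (law of cosines together with smoothness of $\arccos$ on $(-1,1)$), and since every $\alpha_{ij}\in(0,\pi)$ the Lobachevsky function $\Lambda$ is smooth at each $\alpha_{ij}$; hence $cov$ is smooth on $\mathscr L$. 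Combining Proposition~\ref{cov}, which gives $\partial cov/\partial l_{ij}=\alpha_{ij}$, with $\partial F/\partial u_i=a_i$ and the chain rule yields the factorization
$$\mathrm{Hess}(cov)|_l=2\,L^{\mathsf T}\,\mathrm{Hess}(F)|_{L(l)}\,L,\qquad l\in\mathscr L.$$

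Next I would pin down the kernel of $\mathrm{Hess}(F)$. By Lemma~\ref{F}, the relation $F(u+(k,k,k))=F(u)+k\pi$ forces $(1,1,1)\in\ker\mathrm{Hess}(F)|_u$, while strict convexity of $F$ on the slice $\{u_1+u_2+u_3=0\}$ (valid at points obeying the triangle inequalities) makes $\mathrm{Hess}(F)|_u$ positive definite on $\{v:v_1+v_2+v_3=0\}$. Since these two subspaces are complementary in $\R^3$ and $(1,1,1)$ lies in the kernel, a one-line computation shows $\mathrm{Hess}(F)|_{L(l)}$ is positive semidefinite with kernel exactly $\mathrm{span}\{(1,1,1)\}$ for every $l\in\mathscr L$.

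The heart of the argument is then the linear algebra matching $\widehat{\R^4}$ with $L^{-1}(\mathrm{span}\{(1,1,1)\})$. For $w\in\R^4$ one computes $L\big((w_i+w_j)_{ij}\big)=\tfrac12(\sum_i w_i)(1,1,1)$, so $L(\widehat{\R^4})\subseteq\mathrm{span}\{(1,1,1)\}$; the $\R^4$-action is injective on $\R^6$, so $\dim\widehat{\R^4}=4$, while $L$ is surjective and hence $\dim L^{-1}(\mathrm{span}\{(1,1,1)\})=\dim\ker L+1=3+1=4$. Therefore $\widehat{\R^4}=L^{-1}(\mathrm{span}\{(1,1,1)\})$. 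Feeding this into the factorization, for $v\in\R^6$ we get $\mathrm{Hess}(cov)|_l(v,v)=2\,\mathrm{Hess}(F)|_{L(l)}(Lv,Lv)\ge0$, with equality iff $Lv\in\mathrm{span}\{(1,1,1)\}$ iff $v\in\widehat{\R^4}$. Thus $\mathrm{Hess}(cov)|_l$ is positive semidefinite with kernel exactly $\widehat{\R^4}$, so it is positive definite on $\R^6/\widehat{\R^4}$; in particular the restriction of $cov$ to $\R^6/\widehat{\R^4}$ has positive definite Hessian at each point of $\mathscr L$ and is therefore strictly convex on $\mathscr L\cap\R^6/\widehat{\R^4}$.

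I expect the main obstacle to be the bookkeeping in the last paragraph: carefully verifying that the four-dimensional gauge subspace $\widehat{\R^4}$ arising from the vertex-based $\R^4$-action is exactly the $L$-preimage of the one-dimensional degeneracy direction $(1,1,1)$ of the triangle functional. Everything else---smoothness on $\mathscr L$, the Hessian factorization, and the kernel of $\mathrm{Hess}(F)$---is a direct transcription of Lemma~\ref{F} and Proposition~\ref{cov}. A minor point to address is that strict convexity on the possibly non-convex set $\mathscr L\cap\R^6/\widehat{\R^4}$ is to be read as positive definiteness of the restricted Hessian, equivalently local strict convexity along the $2$-plane $\R^6/\widehat{\R^4}$.
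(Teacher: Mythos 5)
Your proof is correct in substance but follows a genuinely different route from the paper's. The paper obtains the key fact that $\mathrm{Hess}(cov)|_l$ has rank two by citing Theorem~2.14 of Yang's thesis \cite{TianyuYang}, and combines this with the $\R^4$-invariance of $F_\alpha$ (which forces $\widehat{\R^4}\subseteq\ker \mathrm{Hess}(cov)|_l$) and the convexity from Proposition~\ref{cov} to identify the kernel by a dimension count. You instead derive the rank statement yourself: the factorization $\mathrm{Hess}(cov)|_l=2L^{\mathsf T}\,\mathrm{Hess}(F)|_{L(l)}\,L$ through the opposite-edge-averaging map $L$, together with $\ker\mathrm{Hess}(F)=\mathrm{span}\{(1,1,1)\}$ and the identification $\widehat{\R^4}=L^{-1}(\mathrm{span}\{(1,1,1)\})$, yields the rank and the kernel simultaneously; your dimension bookkeeping ($\dim\widehat{\R^4}=4=\dim\ker L+1$) is verified correctly. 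This is more self-contained than the paper's appeal to the thesis. The one step you should shore up is the claim that strict convexity of $F$ on the slice $\{u_1+u_2+u_3=0\}$ ``makes'' $\mathrm{Hess}(F)|_u$ positive definite on that plane: strict convexity does not imply pointwise positive definiteness of the Hessian (consider $x\mapsto x^4$ at the origin). What you actually need is the standard computation that the matrix $(\partial a_i/\partial u_j)$ of a nondegenerate Euclidean triangle is positive semidefinite of rank two with kernel spanned by $(1,1,1)$; this is contained in Luo \cite{[L1]} (it is how the strict convexity asserted in Lemma~\ref{F} is established in the first place), so it should be invoked directly rather than deduced from the convexity statement. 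With that substitution the argument is complete.
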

\begin{proof} For $\alpha=(\frac{\pi}{3},\frac{\pi}{3},\cdots,\frac{\pi}{3}),$ consider the function $F_\alpha$ on $\R^6.$ For any $l\in \R^6,$ since $F_\alpha$ is invariant under the action of $\R^4,$
$\widehat{\R^4}$ is contained in the kernel of $\mathrm{Hess}(F_\alpha)|_l=\mathrm{Hess}(cov)|_l.$ By \cite[Theorem~2.14]{TianyuYang}, the rank of $\mathrm{Hess}(cov)|_l$ is two. Since $cov$ is convex by Proposition~\ref{cov}, the kernel of $\mathrm{Hess}(cov)|_l$ is $\widehat{\R^4}.$ This implies that $\mathrm{Hess}(cov)|_l$ is positive definite in $\R^6/\widehat{\R^4}.$ The result follows.
\end{proof}



Now we consider the properties of generalized decorated metric on pseudo 3-manifolds with triangulation.
Let $(M,\T)=\mathscr{T}/\sim$ be a closed pseudo 3-manifold,  where $\mathscr{T}$ be the disjoint union of tetrahedra $T_1 \sqcup\cdots \sqcup T_t.$ We denote by  $T=T(\mathcal{T})$ the set of tetrahedra in $\mathscr{T}.$
A $quad$ in the triangulation $\T$ is a pair of opposite edges in a tetrahedron in $T$ and we write $\square = \square(\T)$ for the set of all quads in $\T$. For $q \in \square$, $e \in E$ and $\sigma \in T$, we use $q \subset \sigma$ to denote that the quad $q$ is contained in the tetrahedron $\sigma$ and use $q \sim e$ or $e \sim q$ to denote that $e \subset q$.
For any $\sigma\in T$ and any generalized decorated metric $l,$ we denote by $l_\sigma\in \R^6$ the restriction of the metric $l$ on $\sigma.$

\begin{defi}
An \emph{angle assignment} on $(M, \T)$ is a map: $\alpha: \square \to \mathbb{R}_{\ge 0}$ so that for each tetrahedron $\sigma \in T$, $\sum_{q \subset \sigma} \alpha(q) = \pi$. The cone angle of $\alpha$ is defined to be $k_\alpha: E \to \mathbb{R}_{\ge 0}$ where $$k_\alpha(e) = \sum_{q \sim e} \alpha(q).$$
For a generalized decorated metric $l$ on $(M, \T),$
it associates with an angle assignment $\alpha = \alpha_l:$ for any $q\in \square,$
$\alpha(q)$ is defined as the extended dihedral angle at an edge $e$ such that $e\sim q$ in the tetrahedron with generalized decorated metric induced by $l.$ We write $k_l = k_{\alpha_l}$ for the cone angles at edges.

\end{defi} Using the above notation, the generalized Ricci curvature is given by
$$\wt{K}_e(l)=2\pi-k_l(e),\quad \forall e\in E.$$

\begin{defi}For a generalized decorated metric $l$ on $(M, \T),$ the volume (resp. co-volume) function for the metric $l$ is defined as
$$vol(l)=\sum_{\sigma \in T} vol_\sigma(l)\quad (\mathrm{resp.}\ cov(l)=\sum_{\sigma \in T} cov_\sigma(l)),$$ where $vol_\sigma(l)=vol(l_\sigma)$ (resp. $cov_\sigma(l)=cov(l_\sigma)$) is the volume (resp. co-volume) of the tetrahedron $\sigma$ endowed with the metric $l_\sigma.$
\end{defi}
Hence,
$$vol(l) = \sum_{q \in \square} \Lambda(\alpha_l(q)),$$
\beq
cov(l) =2vol(l) + l\cdot k_l= \sum_{\sigma \in T} \sum_{q \subset \sigma} \big [2\Lambda(\alpha_l(q)) + \alpha_l(q) \sum_{e \sim q} l(e)\big].
\eeq


The action of $\R^V$ on $\R^E$ is defined as follows: for any $w\in\R^V, l\in \R^E,$
$$(w+l)_{e}:= l_{e}+w(e_+)+w(e_-),\quad \forall e\in E,$$ where $e_+$ and $e_-$ are end-vertices of $e.$ We denote by $\widehat{\R^V}:=\{w+0: w\in \R^V\}$ with $0\in \R^E,$ and by $\R^E/\widehat{\R^V}$ the orthogonal subspace of $\widehat{\R^V}$ w.r.t. the standard inner product.
For any $l_0\in \R^E,$ we set
\begin{equation}\label{def:F}F_{l_0}(l):=cov(l)-k_{l_0} \cdot l,\quad \forall l\in \R^E.\end{equation}
Luo and Yang proved the following proposition.
\begin{prop}[Proposition~3.4 in \cite{[LY]}]\label{Prop:pp1}
\begin{enumerate}
\item For any $l_0\in \R^E,$ $F_{l_0}$ is invariant under the action of $\R^V$, i.e.
$$F_{l_0}(w+l)=F_{l_0}(l),\quad \forall\ l\in \R^E, w\in \R^V.$$
\item For any $l_0\in \mathscr{L}(M,\T),$
\begin{equation}\label{eq:proper}
\lim_{l\in \R^E/\widehat{\R^V}, l\to \infty} F_{l_0}(l)=+\infty.
\end{equation}
\end{enumerate}
\end{prop}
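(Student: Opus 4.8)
The plan is to handle the two assertions separately, with (1) a bookkeeping computation and (2) carrying all the real content.

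For part (1) I would exploit that the $\widehat{\R^V}$-action moves only the horospheres and not the underlying (generalized) ideal tetrahedra: under $l\mapsto w+l$ the three side lengths $e^{(l_{ij}+l_{kh})/2}$ of each auxiliary (generalized) Euclidean triangle all rescale by the common factor $e^{(w_i+w_j+w_k+w_h)/2}$, so every extended dihedral angle, and hence $vol$ and the cone-angle vector $k_l$, is unchanged. Writing $cov(l)=2\,vol(l)+l\cdot k_l$, this gives $cov(w+l)=cov(l)+\sum_{e}(w(e_+)+w(e_-))\,k_l(e)$. The key combinatorial identity is that for \emph{any} angle assignment $\alpha$ with cone angle $k=k_\alpha$ one has $\sum_{e}(w(e_+)+w(e_-))k(e)=\pi\sum_{v\in V}n_v\,w(v)$, where $n_v$ is the number of tetrahedron-corners identified to $v$; this follows by reorganizing the sum over corners and using that the three quads meeting a fixed corner exhaust a tetrahedron's quads, so their angles sum to $\sum_{q\subset\sigma}\alpha(q)=\pi$. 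Since this increment is $\pi\sum_v n_v w(v)$ \emph{independently} of the angle assignment, applying it to $\alpha_l$ (inside $cov$) and to the fixed $\alpha_{l_0}$ (inside $k_{l_0}\cdot l$) yields identical increments, which cancel and give $F_{l_0}(w+l)=F_{l_0}(l)$.

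For part (2) the strategy is convex analysis on the quotient. By Proposition~\ref{cov}, $cov$ is convex, so $F_{l_0}$ is convex on $\R^E$, and by part (1) it is $\widehat{\R^V}$-invariant, hence descends to a convex function on $\R^E/\widehat{\R^V}$. A finite convex function on a finite-dimensional space is coercive iff its recession function is strictly positive on every nonzero direction; since the unit sphere of $\R^E/\widehat{\R^V}$ is compact and the recession function is lower semicontinuous, strict positivity there produces a uniform linear lower bound and hence \eqref{eq:proper}. So everything reduces to computing $f^\infty(d):=\lim_{t\to\infty}t^{-1}F_{l_0}(l_0+td)$ for $d\perp\widehat{\R^V}$ and showing $f^\infty(d)>0$ when $d\neq0$. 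I would differentiate along the ray: $\tfrac{d}{dt}F_{l_0}(l_0+td)=\langle k_{l_0+td}-k_{l_0},\,d\rangle$, whose $t$-derivative is $\langle\mathrm{Hess}(cov)|_{l_0+td}\,d,\,d\rangle\ge0$. Hence $t\mapsto\langle k_{l_0+td},d\rangle$ is nondecreasing and bounded (all extended angles lie in $[0,\pi]$), so it converges and $f^\infty(d)\ge0$, with equality at $t=0$.

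The rigidity is the crux. If $f^\infty(d)=0$, then the nonnegative nondecreasing function $\langle k_{l_0+td}-k_{l_0},d\rangle$ has limit $0$, so it vanishes identically, forcing $\langle\mathrm{Hess}(cov)|_{l_0+td}\,d,d\rangle\equiv0$; for small $t$ we have $(l_0+td)_\sigma\in\mathscr{L}$ for every $\sigma$ (as $l_0\in\mathscr{L}(M,\T)$ and $\mathscr{L}$ is open), whence Lemma~\ref{lem:convex} gives $d_\sigma\in\widehat{\R^4}$ on each tetrahedron. The final — and I expect hardest — step is to promote these local kernels to a global one: the per-tetrahedron vertex data $u^\sigma\in\R^4$ (uniquely determined by $d_\sigma\in\widehat{\R^4}$) must be shown consistent across the gluing, so that they assemble into a single $u\in\R^V$ realizing $d$, i.e. $d\in\widehat{\R^V}$. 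Across a shared triangular face the value $u^\sigma_c=\tfrac12\big(d(e)+d(e')-d(e'')\big)$, with $e,e'$ the two face-edges at $c$ and $e''$ the opposite one, is computed purely from shared edge-lengths and therefore agrees on both sides; traversing the link of each vertex then propagates this agreement around all corners at that vertex. Combined with $d\perp\widehat{\R^V}$ this forces $d=0$, giving $f^\infty(d)>0$ for $d\neq0$ and the desired coercivity. The main obstacle is exactly this gluing/rigidity step: it is where connectivity of the vertex links (automatic in the cusped-manifold application, where links are tori) lets the local nondegeneracy of Lemma~\ref{lem:convex} upgrade to nondegeneracy transverse to $\widehat{\R^V}$.
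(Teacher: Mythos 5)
The paper does not prove this proposition at all: it is imported verbatim as Proposition~3.4 of \cite{[LY]}, so there is no internal argument to compare against. Your proposal is a correct self-contained proof, and it is worth recording how it relates to what the paper \emph{does} prove. Part (1) is right: the common rescaling factor $e^{(w_1+w_2+w_3+w_4)/2}$ of the three auxiliary edge lengths leaves every extended angle (including the degenerate ones, since the condition $x_i\ge x_j+x_k$ is scale-invariant) and hence $vol$ and $k_l$ unchanged, and your corner-reorganization identity $\sum_e\bigl(w(e_+)+w(e_-)\bigr)k_\alpha(e)=\pi\sum_v n_v w(v)$, valid for \emph{every} angle assignment because $\sum_{q\subset\sigma}\alpha(q)=\pi$, makes the two increments cancel. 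For part (2), your recession-function reduction is standard and sound: $f^\infty(d)=\lim_t\langle k_{l_0+td}-k_{l_0},d\rangle$ exists by monotonicity of the gradient of the convex $C^1$ function $cov$ plus boundedness of angles, and the crux --- that $f^\infty(d)=0$ forces $d\in\ker\mathrm{Hess}(cov)|_{l_0}$ and hence $d\in\widehat{\R^V}$ --- is exactly the content of the paper's own Lemma~\ref{lem:strict} (Lemma~\ref{lem:convex} on each tetrahedron plus Neumann's consistency lemma), used there for strict convexity rather than properness; your face-corner formula $u^\sigma_c=\tfrac12(d(e)+d(e')-d(e''))$ is precisely the consistency computation, and connectedness of vertex links holds by construction since vertex classes are generated by the face pairings. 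Two cosmetic cautions: the monotonicity of $t\mapsto\langle k_{l_0+td},d\rangle$ should be justified by monotonicity of $\nabla cov$ rather than a global Hessian ($cov$ is only $C^1$ off $\mathscr{L}$), and the hypothesis $l_0\in\mathscr{L}(M,\T)$ is exactly what licenses the Hessian computation on a small initial segment of the ray --- you use it correctly, and the statement would indeed fail without it. Net effect: your argument derives the cited properness from ingredients the paper already establishes, which is arguably tidier than leaving it as an external citation.
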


It was proved by Luo and Yang \cite{[LY]} that the co-volume function $cov: \mathbb{R}^E \to \mathbb{R}$ is $C^1$-smooth and convex, which is smooth on $\mathscr{L}(M,\T).$ We prove the following lemma.
\begin{lem}\label{lem:strict}
The restriction function $cov|_{\R^E/\widehat{\R^V}}$ is strictly convex in $\mathscr{L}(M,\T)\cap \R^E/\widehat{\R^V}.$
\end{lem}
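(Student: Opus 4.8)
The plan is to show that at every $l\in\mathscr{L}(M,\T)$ the Hessian $\mathrm{Hess}(cov)|_l$, which is positive semidefinite because $cov$ is convex, has kernel exactly $\widehat{\R^V}$; positive definiteness of $\mathrm{Hess}(cov)|_l$ restricted to the complementary subspace $\R^E/\widehat{\R^V}$ is then exactly the asserted strict convexity of $cov|_{\R^E/\widehat{\R^V}}$ on $\mathscr{L}(M,\T)\cap\R^E/\widehat{\R^V}$. The inclusion $\widehat{\R^V}\subseteq\ker\mathrm{Hess}(cov)|_l$ is immediate from the $\R^V$-invariance of $cov$ (Proposition~\ref{Prop:pp1}(1), noting that $F_{l_0}$ and $cov$ differ by a linear function and hence share the same Hessian). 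The whole content is therefore the reverse inclusion.

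For the reverse inclusion I would exploit the additivity $cov(l)=\sum_{\sigma\in T}cov(l_\sigma)$. Writing $\iota_\sigma\colon\R^E\to\R^6$, $l\mapsto l_\sigma$, for the linear restriction map, the chain rule gives
\[
\langle \mathrm{Hess}(cov)|_l\,\xi,\xi\rangle=\sum_{\sigma\in T}\langle \mathrm{Hess}(cov)|_{l_\sigma}\,(\iota_\sigma\xi),\iota_\sigma\xi\rangle,\qquad \xi\in\R^E,
\]
where on the right $cov$ denotes the single-tetrahedron co-volume $\R^6\to\R$; this is a sum of nonnegative terms since each single-tetrahedron $cov$ is convex. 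If $\xi\in\ker\mathrm{Hess}(cov)|_l$ the left-hand side vanishes, so each summand vanishes. As $l\in\mathscr{L}(M,\T)$ means $l_\sigma\in\mathscr{L}$ for every $\sigma$, Lemma~\ref{lem:convex} applies tetrahedron by tetrahedron: the vanishing of the $\sigma$-th quadratic form forces $\iota_\sigma\xi\in\widehat{\R^4}$, i.e. there is a potential $w^\sigma$ on the four corners of $\sigma$ with $(\iota_\sigma\xi)(v_iv_j)=w^\sigma(v_i)+w^\sigma(v_j)$, and $w^\sigma$ is unique because $\R^4\to\R^6$, $w\mapsto(w_i+w_j)_{ij}$, is injective.

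It remains to glue the local potentials into a global one, which I expect to be the main (combinatorial) obstacle. Passing to the disjoint union $\mathscr{T}$, the $w^\sigma$ assemble into a function $\wt w$ on the set of all corners, and $\xi$ pulls back to $\wt\xi$ on the edges of $\mathscr{T}$ with $\wt\xi(\wt a\wt b)=\wt w(\wt a)+\wt w(\wt b)$. I would then show $\wt w$ is constant on each $\sim$-class of corners. Since $\sim$ is generated by the face-pairing isometries, it suffices to treat one pairing of a face $\wt a\wt b\wt c$ with $\wt a'\wt b'\wt c'$: equating $\wt\xi$ on the three identified edge pairs and setting $\delta(x)=\wt w(\wt x)-\wt w(\wt x')$ yields $\delta(a)+\delta(b)=\delta(b)+\delta(c)=\delta(c)+\delta(a)=0$, whence $\delta(a)=\delta(b)=\delta(c)=0$. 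Thus $\wt w$ agrees across every pairing and descends to $w\colon V\to\R$ with $\xi(e)=w(e_+)+w(e_-)$, i.e. $\xi\in\widehat{\R^V}$. This gives $\ker\mathrm{Hess}(cov)|_l=\widehat{\R^V}$ and hence the lemma. The delicate points are precisely those of the pseudo-manifold setting: several corners or edges of a single tetrahedron may be identified in $M$, so one must argue with corners of $\mathscr{T}$ rather than vertices of $M$, relying on the fact that corner identifications are transitively generated by face pairings, each of which preserves $\wt w$ by the above $3\times 3$ computation.
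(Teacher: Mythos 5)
Your proposal is correct and follows essentially the same route as the paper: both reduce the lemma to showing $\ker\mathrm{Hess}(cov)|_l=\widehat{\R^V}$, decompose the quadratic form as a sum of nonnegative per-tetrahedron contributions, and invoke Lemma~\ref{lem:convex} to obtain local potentials $w^\sigma$ on each tetrahedron. The only difference is in the gluing step, where the paper cites Neumann's Lemma for the consistency of the $w^\sigma$ across tetrahedra, while you prove that step directly via the $\delta(a)+\delta(b)=\delta(b)+\delta(c)=\delta(c)+\delta(a)=0$ computation on each face pairing -- a correct, self-contained substitute for the citation.
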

\begin{proof}
Consider $F_0:\R^E\to \R$ where $0\in \R^E.$ For any $l\in \R^E,$
$$Hess(F_0)|_l=Hess(cov)|_l.$$ Since $F_0$ is invariant under the action of $\R^V,$
$\widehat{\R^V}$ is contained in the kernel of $Hess(cov)|_l.$ Since $cov$ is convex on $\R^E,$ it suffices to prove that the kernel of $Hess(cov)|_l$ is exactly $\widehat{\R^V},$ which implies that $Hess(cov)|_l$ is positive definite on $\R^E/\widehat{\R^V}.$ As a consequence, $cov|_{\R^E/\widehat{\R^V}}$ is strictly convex in $\mathscr{L}(M,\T)\cap \R^E/\widehat{\R^V}.$

Take any $x\in \R^E$ in the kernel of $Hess(cov)|_l.$ We need to show that $x\in \widehat{\R^V}.$
\begin{eqnarray*}0&=&x^T(Hess(cov)|_l) x=x^T\left(\sum_{\sigma\in T}Hess(cov_\sigma)|_l\right) x\\
&=&\sum_{\sigma\in T}x_\sigma^T\left(Hess(cov)|_{l_\sigma}\right) x_\sigma\geq 0,
\end{eqnarray*} where $x_\sigma$ is the restriction of $x$ to $\sigma.$ Hence $x_\sigma$ is in the kernel of $Hess(cov)|_{l_\sigma}.$ By Lemma~\ref{lem:convex}, there exists
$w_\sigma\in \R^4,$ a function on vertices of $\sigma$, such that
$$x_\sigma=w_\sigma+0,\quad 0\in \R^6.$$ By Neumann's Lemma \cite{Neumann1992}, see also \cite[p.1354]{Choi2004} or \cite[Lemma~3.3]{[LY]}, $w_\sigma,$
$\sigma\in T,$ are consistent, i.e. for two tetrahedra $\sigma_1,\sigma_2$ sharing a vertex $v,$ $w_{\sigma_1}(v)=w_{\sigma_2}(v).$ Hence, there exists a function $w\in \R^V$ such that the restriction of $w$ to $\sigma$ is $w_\sigma.$ So that $x=w+0, \ 0\in \R^V$ and $x\in \widehat{\R^V}.$ This proves the result.

\end{proof}


\section{The combinatorial Ricci flow}

Let $(M,\T)=\mathscr{T}/\sim$ be a closed pseudo 3-manifold.
Let $E=E(\T)=\{e_1,e_2,\cdots, e_m\},$ where $m$ is the number of edges. To simplify the notation, we write $E=\{1,2,\cdots, m\},$ that is, each edge $e_i$ is replaced by the index $i.$ In this way, each sub-index of an edge $e_i$ is replaced by $i.$ For example, we write the edge length vector $l=(l_1,l_2,\cdots l_m)$ and the Ricci curvature ${K}(l)=({K}_1(l), {K}_2(l),\cdots, {K}_m(l)),$ etc.

The combinatorial Ricci flow \eqref{eq:luoflow} in $\mathscr{L}(M, \T)$ can be written as follows:
\begin{equation*}
\begin{cases}
\frac{d l_i(t)}{d t} = K_i(l(t)) ,\quad \forall i\in E, t\geq 0,\\
l(0) = l_0,
\end{cases}
\end{equation*}
where $l_0\in\mathscr{L}(M, \T)$ and $l(t)\in \mathscr{L}(M, \T),\forall t>0.$ Since $\mathscr{L}(M, \T)$ is an open subset in $\R^E$ and $K(l)$ is smooth on $\mathscr{L}(M, \T),$ Picard's theorem in ordinary differential equations yields the following.
\tm
For a closed pseudo 3-manifold $(M, \mathcal{T}),$ for any initial data $l_0\in\mathscr{L}(M, \T),$ the solution $\{l(t)|t\in [0, T )\}\subset \mathscr{L}(M, \T)$ to the combinatorial Ricci flow \eqref{eq:luoflow} exists and is unique on the maximal existence interval $[0, T )$ with $0 < T\leq \infty.$

\tmd


We define the functional $$H:\mathscr{L}(M, \T)\to \R,\quad l\mapsto H(l)=cov(l) - 2\pi\sum_{i=1}^m l_i.$$  By Proposition~\ref{cov},
\beqs
\frac{\partial H}{\partial l_j}
&=& \frac{\partial cov(l)}{\partial l_j} - 2\pi=\sum_{\sigma\in T}\frac{\partial cov_\sigma(l)}{\partial l_j} - 2\pi\\
&=& \sum_{ q\sim j, q\in \square}\alpha(q) -2\pi\\
&=& -K_j
\eeqs
By this result, $l\in \mathscr{L}(M,\T)$ has zero Ricci curvature if and only if $l$ is a critical point of the functional $H.$

Hence, the combinatorial Ricci flow in \eqref{eq:luoflow} can be written as
\begin{equation*}\begin{cases}
\frac{\partial l_i(t)}{\partial t} = -\partial_i H(l),\\
l(0) = l_0.
\end{cases}
\end{equation*}
This implies that the combinatorial Ricci flow is the negative gradient flow of the functional $H.$

\begin{prop}\label{H}
The functional $H$ is non-increasing along the combinatorial Ricci flow (\ref{eq:luoflow}), i.e. for any solution $l(t)$ to the flow (\ref{eq:luoflow}),
\[\frac{\partial H(l(t))}{\partial t} \le 0.\]
\end{prop}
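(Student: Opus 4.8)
The plan is to show that $H$ is non-increasing along the flow by computing its time derivative directly and recognizing it as a non-positive expression. Since the combinatorial Ricci flow has already been identified as the negative gradient flow of $H$, the proof amounts to evaluating $\frac{d}{dt}H(l(t))$ via the chain rule.

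First I would apply the chain rule to $H(l(t))$ along a solution $l(t)$ of the flow:
\begin{equation*}
\frac{d}{dt}H(l(t)) = \sum_{i=1}^m \frac{\partial H}{\partial l_i}(l(t))\,\frac{d l_i(t)}{dt}.
\end{equation*}
Next I would substitute the two key facts already established in the excerpt. On one hand, the computation immediately preceding this proposition shows that $\frac{\partial H}{\partial l_i} = -K_i(l)$ for each edge $i$. On the other hand, the flow equation \eqref{eq:luoflow} gives $\frac{d l_i(t)}{dt} = K_i(l(t))$. Inserting both identities yields
\begin{equation*}
\frac{d}{dt}H(l(t)) = \sum_{i=1}^m \big(-K_i(l(t))\big)\,K_i(l(t)) = -\sum_{i=1}^m K_i(l(t))^2 = -|K(l(t))|^2 \le 0.
\end{equation*}

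The inequality is then manifest, since a sum of squares is non-negative, so its negative is non-positive; this proves $\frac{d}{dt}H(l(t)) \le 0$ as claimed. I do not anticipate any genuine obstacle here, as every ingredient is already in place: the formula $\partial_i H = -K_i$ was derived just above using Proposition~\ref{cov}, and the flow equation supplies $\dot{l}_i = K_i$. The only point requiring a word of care is that the solution $l(t)$ remains in $\mathscr{L}(M,\T)$, where $\mathrm{cov}$ (and hence $H$) is smooth and the derivative computation is valid; this is guaranteed by the preceding Main Theorem on short-time existence, which keeps the solution inside the open set $\mathscr{L}(M,\T)$ on its maximal interval. It is also worth remarking that equality holds precisely when $K(l(t)) = 0$, i.e. at a zero-curvature metric, which foreshadows the role of $H$ as a Lyapunov functional in the convergence analysis.
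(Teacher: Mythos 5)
Your proposal is correct and is exactly the ``direct calculation'' the paper carries out: the chain rule combined with $\partial_i H = -K_i$ and $\dot{l}_i = K_i$ gives $\frac{d}{dt}H(l(t)) = -|K(l(t))|^2 \le 0$. Your additional remarks about the solution staying in $\mathscr{L}(M,\T)$ and the equality case are sensible but not needed beyond what the paper states.
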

\begin{proof}
By direct calculation,
\[\frac{\partial H(l(t))}{\partial t} = -|K(l(t))|^2 \le 0.\]
\end{proof}
\begin{prop}\label{prop:zero}
Let $l(t)$ be a solution to the combinatorial Ricci flow (\ref{eq:luoflow}) which converges to $\bar l \in \mathscr{L}$. Then
$K(\bar l) = 0$.
\end{prop}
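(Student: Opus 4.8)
The plan is to exploit that the flow is the negative gradient flow of $H$ and that the dissipation rate is exactly $|K|^2$, as recorded in Proposition~\ref{H}. First I would observe that since $\bar l\in\mathscr{L}$ and $\mathscr{L}$ is open in $\R^E$, the hypothesis $l(t)\to\bar l$ forces the maximal existence time to be $T=\infty$: otherwise $l$ would extend continuously to $t=T$ with $l(T)=\bar l\in\mathscr{L}$, and Picard's theorem would allow continuation past $T$, contradicting maximality. Thus the convergence $l(t)\to\bar l$ is as $t\to\infty$, and the energy identity below runs over the whole half-line.

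Next, integrating the identity of Proposition~\ref{H} gives
\[
H(l_0)-H(l(t))=\int_0^t|K(l(s))|^2\,ds,\qquad t\ge 0.
\]
Since $\bar l\in\mathscr{L}$, the co-volume function $cov$, and hence $H=cov-2\pi\sum_i l_i$, is smooth (in particular continuous) in a neighborhood of $\bar l$; together with $l(t)\to\bar l$ this yields $H(l(t))\to H(\bar l)$, a finite limit. Letting $t\to\infty$ in the identity shows that the total dissipation is finite,
\[
\int_0^\infty|K(l(s))|^2\,ds=H(l_0)-H(\bar l)<\infty.
\]

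The final step is a contradiction argument. Suppose $K(\bar l)\neq 0$. By continuity of $K$ on $\mathscr{L}$ and $l(t)\to\bar l$, we have $|K(l(t))|^2\to|K(\bar l)|^2>0$, so there exist $c>0$ and $T_0$ with $|K(l(t))|^2\ge c$ for all $t\ge T_0$; then $\int_{T_0}^\infty|K(l(s))|^2\,ds=+\infty$, contradicting the finiteness of the total dissipation. Hence $K(\bar l)=0$. Equivalently, one may avoid the contradiction by using the mean value theorem for integrals on each interval $[n,n+1]$ to extract $t_n\to\infty$ with $|K(l(t_n))|^2\to 0$, and then pass to the limit via continuity of $K$ and $l(t_n)\to\bar l$.

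I do not expect a serious obstacle here; the argument is the standard one for gradient flows converging to an interior critical point. The only points that require care are qualitative rather than computational: confirming global existence from convergence to an interior point of $\mathscr{L}$, and invoking continuity of both $H$ and $K$ at $\bar l$, both of which rest on $\bar l$ lying in the open set $\mathscr{L}$ where these functions are smooth.
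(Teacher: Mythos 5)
Your proof is correct and follows essentially the same route as the paper: both rest on the dissipation identity $\frac{d}{dt}H(l(t))=-|K(l(t))|^2$, the finiteness of $\lim_{t\to\infty}H(l(t))$ from continuity of $H$ at $\bar l\in\mathscr{L}$, and continuity of $K$ to conclude $K(\bar l)=0$ (your closing remark about the mean value theorem on $[n,n+1]$ is exactly the paper's argument). Your preliminary observation that convergence to an interior point of the open set $\mathscr{L}$ forces $T=\infty$ is a careful addition the paper leaves implicit.
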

\begin{proof}
This is well-known in classical ODE theory. For the convenience of readers, we include the proof here. By Proposition \ref{H}, $H(l(t))$ is non-increasing. Moreover,
$\{H(l(t)) : t \ge 0\}$ is bounded, since $H$ is continuous on $\mathscr{L}$ and $l(t) \to \bar l$, $t \to \infty$.
Hence the following limit exists and is finite,
\[\lim_{t \to \infty} H(l(t)) = C.\]
Consider the sequence $\{H(l(n))\}^\infty_{n=1}$. By the mean value theorem, for any
$n \ge 1$ there exists $t_n \in (n, n + 1)$ such that
\beq
H(l(n + 1)) - H(l(n)) =\left.\frac{d}{dt} \right|_{t = t_n} H(l(t)) = - |K(l(t_n))|^2.
\eeq

Note that $\lim_{n \to \infty} H(l(n + 1)) - H(l(n)) = 0$. We get
\[\lim_{n \to \infty} K_i(l(t_n)) = 0, \quad \forall i\in E.\]

Since $l(t_n) \to \bar l$ as $n \to \infty$, the continuity of $K_i$ yields that $K_i(\bar l) = 0$ for any $i\in E$.
This proves the proposition.
\end{proof}

Next we consider the extended Ricci flow on the set of generalized decorated metrics, i.e. $\R^E.$ The extended Ricci flow \eqref{exkl} can be written as
\begin{equation*}
\begin{cases}
\frac{\partial l_i(t)}{\partial t} = \widetilde K_i(l(t)),\quad \\
l(0) = l_0,
\end{cases}
\end{equation*} where $l_0\in\R^E$ and $l(t)\in \R^E,\forall t>0.$

We define the extended functional $\widetilde H$ on $\mathbb{R}^E$ by
\[\widetilde H(l) = cov(l) - 2\pi\sum_{i=1}^m l_i.\]
Note that $\widetilde H$ is a $C^1$- smooth convex functional on $\R^E$ and it extends the functional $H$ on $\mathscr{L}(M,\T).$  Direct calculation yields that $$\frac{\partial \widetilde H}{\partial l_i} = -\widetilde K_i,\quad \forall i\in E.$$
Hence, $l\in\R^E$ has zero generalized Ricci curvature if and only if it is a critical point of the functional $\wt{H}.$ As in Proposition~\ref{H}, one can show that the functional $\wt{H}$ is non-increasing along the extended Ricci flow \eqref{exkl}.

Note that if a closed pseudo 3-manifold $(M,\T)$ supports a zero-curvature generalized decorated metric $\hat{l},$ then
$$\wt{H}(l)=F_{\hat{l}}(l),$$ where $F_{\hat{l}}$ is defined in \eqref{def:F}. In particular,
$\wt{H}$ is invariant under the action of $\R^V$ on $\R^E$ by Proposition~\ref{Prop:pp1}.

\begin{prop}\label{prop:inv} Let $(M,\T)$ be a closed pseudo 3-manifold which has a zero-curvature generalized decorated metric. Then the solution of the extended Ricci flow is compatible with the action $\R^V$ on $\R^E,$ i.e.,
for any initial data $l_0\in \R^E$ with $l_0=w+l_0^\top,$ where $w\in \R^V$ and $l_0^\top$ is the projection of $l_0$ to $\mathbb{R}^{E}/ \widehat{\mathbb{R}^{V}},$ then the solution $l(t)$ of the extended Ricci flow with the initial data $l_0$ satisfies
$$l(t)=w+l^\top(t), \ \forall t\in[0,\infty),$$ where $l^\top(t)$ is the solution of  the extended Ricci flow with the initial data $l_0^\top.$ Moreover, $l^\top(t)\in \mathbb{R}^{E}/ \widehat{\mathbb{R}^{V}}$ for all $t\in[0,\infty).$

\end{prop}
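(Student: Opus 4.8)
The plan is to exploit the fact that the extended Ricci flow is the negative gradient flow of the functional $\wt H$ (since $\partial \wt H/\partial l_i = -\wt K_i$), together with the $\R^V$-invariance of $\wt H$ that is guaranteed by the hypothesis. Indeed, because $(M,\T)$ carries a zero-curvature generalized decorated metric $\hat l$, the text has already noted $\wt H = F_{\hat l}$, so by Proposition~\ref{Prop:pp1}(1) the functional $\wt H$ is invariant under the $\R^V$-action, i.e. $\wt H(w+l) = \wt H(l)$ for all $w\in\R^V$ and $l\in\R^E$. This single symmetry is the only real input; everything else is bookkeeping.

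The first step is to convert this symmetry into two statements about $\wt K = -\nabla\wt H$. The $\R^V$-action on $\R^E$ is precisely translation by the vector $\hat w\in\widehat{\R^V}$ with $\hat w_e = w(e_+)+w(e_-)$. Differentiating $\wt H(l+\hat w) = \wt H(l)$ in the base point $l$ yields $\nabla\wt H(w+l) = \nabla\wt H(l)$, hence (a) $\wt K(w+l) = \wt K(l)$ for all $w\in\R^V$. Differentiating instead along the direction $\hat w$ yields $\langle\nabla\wt H(l),\hat w\rangle = 0$ for every $\hat w\in\widehat{\R^V}$, hence (b) $\wt K(l)\in\R^E/\widehat{\R^V}$ for every $l\in\R^E$, where $\R^E/\widehat{\R^V}$ is the orthogonal complement of $\widehat{\R^V}$.

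For the equivariance claim I would set $\tilde l(t) := w + l^\top(t)$, with $l^\top$ the solution of the extended Ricci flow starting at $l_0^\top$. Using (a), $\frac{d}{dt}\tilde l(t) = \frac{d}{dt}l^\top(t) = \wt K(l^\top(t)) = \wt K(w + l^\top(t)) = \wt K(\tilde l(t))$, so $\tilde l$ solves the extended Ricci flow, while $\tilde l(0) = w + l_0^\top = l_0$. The uniqueness part of Theorem~\ref{UNI} then gives $l(t) = \tilde l(t) = w + l^\top(t)$ for all $t\ge 0$. For the final assertion, let $P$ be the orthogonal projection onto $\widehat{\R^V}$; by (b), $\frac{d}{dt}Pl^\top(t) = P\,\wt K(l^\top(t)) = 0$, so $Pl^\top(t) = Pl_0^\top = 0$, i.e. $l^\top(t)\in\R^E/\widehat{\R^V}$ for all $t$.

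I expect no serious analytic obstacle here, since the argument is essentially symmetry bookkeeping; the genuine content is the $\R^V$-invariance of $\wt H$, and this is exactly where the existence of a zero-curvature metric is used, as it is what forces $\wt H = F_{\hat l}$. The one point deserving care is the identification of the algebraic $\R^V$-action on $\R^E$ with Euclidean translation by $\widehat{\R^V}$, which is what allows the invariance of $\wt H$ to be split into the two gradient statements (a) and (b). Once this is in place, equivariance is immediate from uniqueness and the invariance of $\R^E/\widehat{\R^V}$ is a one-line projection computation.
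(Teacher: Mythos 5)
Your proposal is correct and follows essentially the same route as the paper: both arguments rest on the $\R^V$-invariance of $\wt H$ (coming from $\wt H=F_{\hat l}$ and Proposition~\ref{Prop:pp1}), the negative gradient flow structure, and the uniqueness statement of Theorem~\ref{UNI}. Your explicit split of the invariance into the two gradient identities (translation-invariance of $\wt K$ and orthogonality of $\wt K$ to $\widehat{\R^V}$) just makes precise what the paper summarizes as ``induced by the negative gradient flow of $\wt H|_{\R^E/\widehat{\R^V}}$.''
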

\begin{proof} Note that $\wt{H}$ is invariant under the action of $\R^V$ on $\R^E$ and the extended Ricci flow is the negative gradient flow of $\wt{H}.$ By these observation, for any initial data $l_0^\top\in \mathbb{R}^{E}/ \widehat{\mathbb{R}^{V}},$ there is a solution of the extended Ricci flow $l^\top(t)\in \mathbb{R}^{E}/ \widehat{\mathbb{R}^{V}}$ for all $t\in[0,\infty).$ In fact, it is induced by the negative gradient flow of $\wt{H}|_{\mathbb{R}^{E}/ \widehat{\mathbb{R}^{V}}}$ on $\mathbb{R}^{E}/ \widehat{\mathbb{R}^{V}}.$ By the uniqueness of the solution of the extended Ricci flow, we prove the above results.

\end{proof}

\begin{theo}[\cite{[LY]}]\label{thm:l1}
Let $(M,\T)$ be a closed pseudo 3-manifold which has a zero-curvature decorated metric. Then $$\lim_{l\in\mathbb{R}^{E}/ \widehat{\mathbb{R}^{V}},l \to \infty} \widetilde H(l) = +\infty.$$ In particular, $\widetilde H(l)$ is proper on $\mathbb{R}^{E}/ \widehat{\mathbb{R}^{V}}.$
\end{theo}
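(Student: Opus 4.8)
The plan is to reduce the statement to the coercivity estimate for $F_{l_0}$ already recorded in Proposition~\ref{Prop:pp1}, by identifying $\wt H$ with $F_{\hat l}$ for a suitable zero-curvature metric $\hat l$. First I would fix, using the hypothesis, a zero-curvature decorated metric $\hat l\in\mathscr{L}(M,\T)$, so that $\wt K_e(\hat l)=0$ for every edge $e$. By the formula $\wt K_e=2\pi-k_l(e)$, this is exactly the statement that the cone angle satisfies $k_{\hat l}(e)=2\pi$ at every edge, i.e. $k_{\hat l}=2\pi(1,\dots,1)\in\R^E$.

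Next I would plug this into the definition \eqref{def:F} of $F_{\hat l}$ to obtain
$$F_{\hat l}(l)=cov(l)-k_{\hat l}\cdot l=cov(l)-2\pi\sum_{i=1}^m l_i=\wt H(l),\qquad\forall\, l\in\R^E,$$
so that $\wt H$ and $F_{\hat l}$ coincide as functions on $\R^E$. This is precisely the observation made in the remark preceding the theorem, and it is the only place where the hypothesis enters.

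Finally, because $\hat l$ is a genuine decorated metric, i.e. $\hat l\in\mathscr{L}(M,\T)$, Proposition~\ref{Prop:pp1}(2) applies with the choice $l_0=\hat l$ and yields $\lim_{l\in\R^E/\widehat{\R^V},\,l\to\infty}F_{\hat l}(l)=+\infty$. Combined with the identity $\wt H=F_{\hat l}$, this is exactly the asserted limit for $\wt H$, from which properness on $\R^E/\widehat{\R^V}$ is immediate: $\wt H$ is continuous, so each sublevel set $\{l\in\R^E/\widehat{\R^V}:\wt H(l)\le C\}$ is closed, and the divergence of $\wt H$ at infinity forces each such set to be bounded, hence compact.

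The point I expect to require the most care is not an estimate but a bookkeeping issue: verifying that a zero-curvature decorated metric really produces the constant angle vector $k_{\hat l}=2\pi(1,\dots,1)$, so that the linear term of $F_{\hat l}$ matches the $-2\pi\sum_i l_i$ in $\wt H$ exactly. The genuine analytic content, namely the coercivity of $F_{l_0}$ on the reduced space $\R^E/\widehat{\R^V}$, is already contained in Proposition~\ref{Prop:pp1}(2) due to Luo and Yang, and it crucially requires $l_0\in\mathscr{L}(M,\T)$ rather than merely a generalized decorated metric; this is the reason the theorem is stated for an honest zero-curvature decorated metric.
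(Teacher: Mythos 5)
Your proposal is correct and follows exactly the paper's own argument: the paper likewise notes that $\wt H = F_{l_0}$ for a zero-curvature decorated metric $l_0$ (since $k_{l_0}=2\pi(1,\dots,1)$) and then invokes Proposition~\ref{Prop:pp1}(2). You merely spell out the bookkeeping that the paper leaves implicit.
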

\begin{proof} By $\wt{H}(l)=F_{l_0}(l),$ where $K(l_0)=0,$ the results follow from Proposition~\ref{Prop:pp1}.
\end{proof}

The following rigidity result was proved by Luo and Yang.
\begin{theo}[Theorem~1.2 in \cite{[LY]}]\label{thm:rigidity}
Let $(M,\T)$ be a closed pseudo 3-manifold which has a zero-curvature decorated metric $l_0\in \mathscr{L}(M,\T).$ Then
the set of zero-curvature generalized decorated metrics on $(M,\T)$ is unique up to the action $\R^V$ on $\R^E,$ i.e.
$$\{l\in \R^E: \wt{K}(l)=0\}=\{w+l_0: w\in \R^V\}.$$
\end{theo}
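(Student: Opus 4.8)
The plan is to prove the two inclusions separately, the first being elementary and the second carrying the rigidity content. For $\{w+l_0 : w\in\R^V\}\subseteq\{l:\wt K(l)=0\}$, I would first note that the generalized curvature $\wt K$ is \emph{itself} invariant under the $\R^V$-action, not merely the functional $\wt H$. Indeed, on a single tetrahedron the action $l_\sigma\mapsto w+l_\sigma$ multiplies all three side lengths $e^{(l_{ij}+l_{kh})/2}$, etc.\ of the associated generalized Euclidean triangle by the common factor $e^{(w_i+w_j+w_k+w_h)/2}$, so the triangle is merely rescaled and its angles $\alpha_{ij}$ are unchanged; summing over the quads incident to an edge then gives $\wt K_e(w+l)=\wt K_e(l)$. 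Since $l_0\in\mathscr L(M,\T)$ has zero curvature, $\wt K(l_0)=K(l_0)=0$, and invariance yields $\wt K(w+l_0)=0$ for every $w\in\R^V$.

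For the reverse inclusion I would pass to the quotient subspace $\R^E/\widehat{\R^V}$. The same invariance shows the zero-curvature set is a union of $\R^V$-orbits: any $l$ with $\wt K(l)=0$ can be written $l=w+l^\top$ with $l^\top\in\R^E/\widehat{\R^V}$, and then $\wt K(l^\top)=\wt K(l)=0$. Hence it suffices to show that the only zero-curvature metric lying in $\R^E/\widehat{\R^V}$ is $l_0^\top$, the projection of $l_0$. Because $\partial_i\wt H=-\wt K_i$, the zero-curvature metrics in this subspace are exactly the critical points of the restriction $\wt H|_{\R^E/\widehat{\R^V}}$; as $\wt H$ is convex and $C^1$ on $\R^E$, its restriction to a linear subspace stays convex, so every such critical point is a global minimizer there. (Theorem~\ref{thm:l1} gives properness, ensuring the minimum is attained, though for this direction we already have the critical point $l_0^\top$ at hand.) Note that $l_0^\top\in\mathscr L(M,\T)$ since $\mathscr L$ is $\R^V$-invariant, the defining triangle inequalities being preserved under the common rescaling above.

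Finally I would establish uniqueness of the minimizer by a convexity-along-segments argument, which is where the genuine difficulty lies: the co-volume is strictly convex only on $\mathscr L$, not on all of $\R^E$, so strict convexity cannot be invoked globally. Suppose $q\in\R^E/\widehat{\R^V}$ is a minimizer with $q\neq l_0^\top$. Since $\wt H$ is convex and both $l_0^\top$ and $q$ attain its minimum, $\wt H$ is constant along the entire segment $[l_0^\top,q]$, which lies in the linear subspace $\R^E/\widehat{\R^V}$. Because $\mathscr L$ is open and $l_0^\top\in\mathscr L$, an initial nondegenerate sub-segment emanating from $l_0^\top$ lies in $\mathscr L\cap\R^E/\widehat{\R^V}$; but there $\wt H=cov-2\pi\sum_i l_i$ is strictly convex by Lemma~\ref{lem:strict} (strict convexity survives the addition of the affine term), contradicting constancy on a nondegenerate segment. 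Hence $q=l_0^\top$, and combined with the $\R^V$-invariance established first, the full zero-curvature set equals $\{w+l_0:w\in\R^V\}$. The crux is thus to upgrade the merely local strict convexity to global uniqueness by exploiting that the hypothesized zero-curvature metric $l_0$ is a genuine decorated metric sitting inside the open locus $\mathscr L$.
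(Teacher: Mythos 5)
Your proposal is correct and follows essentially the same route as the paper's proof: reduce by the $\R^V$-invariance to the subspace $\R^E/\widehat{\R^V}$, observe that zero-curvature metrics are critical points and hence (by convexity of $\wt{H}$) minimizers, and use the strict convexity on $\mathscr{L}(M,\T)\cap\R^E/\widehat{\R^V}$ from Lemma~\ref{lem:strict} to force uniqueness. Your segment argument merely spells out the step the paper leaves implicit, namely why strict convexity only near $l_0^\top$ (rather than globally) already rules out a second minimizer.
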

\begin{proof} Here we give an alternative proof using the strict convexity of the functional $\wt{H}.$ Note that $\wt{H}$ is invariant under the action of $\R^V$ on $\R^E.$ If $l\in \R^E$ has zero Ricci curvature, then so does $w+l$ for any $w\in \R^V.$ Let $l_0^\top$ be the projection of $l_0\in \mathscr{L}(M,\T)$ to $ \R^E/\widehat{\R^V}.$ Then $l_0^\top\in \mathscr{L}(M,\T)$ and it also has zero Ricci curvature.
Note that zero-curvature generalized decorated metrics are critical points of the functional $\wt{H}.$ Since $\wt{H}$ is convex, all of them are in fact minimizers.  By Lemma~\ref{lem:strict},  $\wt{H}$ is strictly convex in $\mathscr{L}(M,\T)\cap \R^E/\widehat{\R^V}.$ Since $l_0^\top\in \mathscr{L}(M, \T)\cap \R^E/\widehat{\R^V}$ is a minimizer of $\wt{H},$ it is the unique critical point of $\wt{H}$ on $\R^E/\widehat{\R^V}.$ This proves the result. 

\end{proof}

Next, we prove the long-time existence and uniqueness of the extended Ricci flow.
\begin{proof}[Proof of Theorem~\ref{UNI}]
Since the extended Ricci curvature $\widetilde K(l)$ is continuous on $\mathbb{R}^E,$ by Peano's existence theorem in classical ODE theory, the extended Ricci flow \eqref{exkl} has at least one solution on some interval $[0, T),$ with the maximal existence time $T.$ Furthermore, $|\wt{K}(l)|$ are uniformly bounded by a constant $C > 0$ which depends on the triangulation $\T.$ Hence
\[|l(t)| \le |l(0)| + Ct\]
for all $t \ge 0$, which implies that $T=\infty.$

Next, we prove the uniqueness of the solution of the extended Ricci flow.
Let $l(t)$ and $\hat l(t)$ be two solutions of of the extended Ricci flow with initial data $l_0.$ We need to prove that $l(t) = \hat l(t)$ for any $t \in [0, \infty).$ 
Set $$h(t) = |l(t) - \hat l(t)|^2 \ge 0, \quad t \in [0, \infty).$$

Then,
\beqs
h'(t) &=& 2(l(t) - \hat l(t))\cdot(\widetilde K(l(t)) - \widetilde K(\hat l(t)))\\
      &=& -2(l(t) - \hat l(t))\cdot(\nabla \widetilde H(l(t)) - \nabla \widetilde H(\hat l(t)))\\
      &\leq&0,
\eeqs where the last inequality follows from the convexity of a $C^1$ functional $\wt{H}.$ 
For $h(0) = 0$, $h(t) =0$, i.e. $l(t) = \hat l(t),$ for any $t\in [0,\infty).$ This proves the result.


\end{proof}

By the same argument as in Proposition~\ref{prop:zero}, one can prove the following.
\begin{prop}
If a solution $l(t)$ of the extended Ricci flow \eqref{exkl} converges to some $\widetilde l \in \mathbb{R}^E$ as $t \to +\infty$. Then $\widetilde K(\widetilde l) = 0.$
\end{prop}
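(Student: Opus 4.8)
The plan is to carry over the proof of Proposition~\ref{prop:zero} verbatim, replacing the functional $H$ by its $C^1$-smooth extension $\widetilde H$, the curvature $K$ by $\widetilde K$, and the domain $\mathscr{L}(M,\T)$ by $\R^E$. Every ingredient needed has already been established on all of $\R^E$: the functional $\widetilde H$ is $C^1$ with $\partial_i \widetilde H = -\widetilde K_i$, the curvature $\widetilde K$ is continuous on $\R^E$, and $\widetilde H$ is non-increasing along the extended flow (as noted after the definition of $\widetilde H$).

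First I would record the decay identity. Since $\partial_i\widetilde H = -\widetilde K_i$ and the flow reads $l_i'(t)=\widetilde K_i(l(t))$, the chain rule applied to the $C^1$ function $t\mapsto\widetilde H(l(t))$ gives
$$\frac{d}{dt}\widetilde H(l(t)) = \sum_{i=1}^m \partial_i\widetilde H(l(t))\, l_i'(t) = -|\widetilde K(l(t))|^2 \le 0.$$
Because $l(t)\to\widetilde l$ and $\widetilde H$ is continuous on $\R^E$, the limit exists and is finite:
$$\lim_{t\to\infty}\widetilde H(l(t)) = \widetilde H(\widetilde l) =: C.$$

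Next I would extract a subsequence along which $\widetilde K$ vanishes in the limit. Applying the mean value theorem to the telescoping differences of $\{\widetilde H(l(n))\}_{n\ge 1}$ produces, for each $n$, a time $t_n\in(n,n+1)$ with
$$\widetilde H(l(n+1))-\widetilde H(l(n)) = \left.\frac{d}{dt}\right|_{t=t_n}\widetilde H(l(t)) = -|\widetilde K(l(t_n))|^2.$$
Since $\widetilde H(l(t))\to C$, the left-hand side tends to $0$, and hence $\widetilde K(l(t_n))\to 0$ as $n\to\infty$. As $t_n\to\infty$ forces $l(t_n)\to\widetilde l$, the continuity of $\widetilde K$ on $\R^E$ yields $\widetilde K(\widetilde l)=\lim_{n\to\infty}\widetilde K(l(t_n))=0$, as desired.

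I do not expect any genuine obstacle, since this is literally the argument of Proposition~\ref{prop:zero}. The only point worth flagging is a regularity one: here $\widetilde H$ is merely $C^1$ on all of $\R^E$, whereas $H$ was smooth on the open set $\mathscr{L}(M,\T)$. This causes no difficulty, because the proof differentiates $\widetilde H$ only once along the curve $l(t)$ and invokes the continuity — not the differentiability — of $\widetilde K$, so $C^1$-regularity of $\widetilde H$ is all that is used.
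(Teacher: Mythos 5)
Your proposal is correct and is exactly what the paper intends: the paper proves this proposition by simply stating ``by the same argument as in Proposition~\ref{prop:zero},'' and your verbatim adaptation (with $H,K,\mathscr{L}(M,\T)$ replaced by $\widetilde H,\widetilde K,\R^E$) is that argument. Your added remark that $C^1$-regularity of $\widetilde H$ and continuity of $\widetilde K$ suffice is a worthwhile clarification but does not change the route.
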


We are ready to prove the main result.
\begin{proof}[Proof of Theorem~\ref{thm:main1}]
We prove the first assertion.
Suppose that it is not true, then there exists a constant $C$ such that
$|l(t)|\leq C,\ t\in [0,\infty).$ 


Let $\phi(t) = \widetilde H(l(t)).$ Then $\phi(t)$ is bounded on $t\in[0,\infty).$ Since $\phi(t)$ is non-increasing,  the following limit exists
$$\lim_{t\to\infty} \phi(t)=C\in \R.$$ 
By the mean value theorem, there are $\xi_n\in (n,n+1)$ such that $$\phi'(\xi_n) = \phi(n + 1) - \phi(n) \to 0,\quad n\to\infty$$
Since $\phi'(t)= - |\widetilde K(l(t))|^2,$ $$\wt K(l(\xi_n))\to 0,\quad n\to\infty.$$
By passing to a subsequence, $l(\xi_n)\to l_\infty\in \R^E.$ This implies that $$\wt{K}(l_\infty)=0.$$ This yields a contradiction and proves the first assertion.

Now we prove the second assertion. By Proposition~\ref{prop:zero}, we only need to prove that if there is a zero-curvature decorated metric, then $l(t)$ converges to a zero-curvature decorated metric exponentially fast. By Proposition~\ref{prop:inv}, it suffices to prove the result for the initial data $l_0\in \mathbb{R}^{E}/ \widehat{\mathbb{R}^{V}}.$ Hence the solution $l(t)$ of the extended Ricci flow  satisfies $l(t)\in \mathbb{R}^{E}/ \widehat{\mathbb{R}^{V}}$ for all $t\in[0,\infty).$

By Luo and Yang's rigidity theorem, Theorem~\ref{thm:rigidity}, there is a unique zero-curvature decorated metric $\hat{l}$ in
$\mathbb{R}^{E}/ \widehat{\mathbb{R}^{V}}.$
By Theorem~\ref{thm:l1}, the functional $\wt{H}|_{\mathbb{R}^{E}/ \widehat{\mathbb{R}^{V}}}$ is proper on $\mathbb{R}^{E}/ \widehat{\mathbb{R}^{V}}$ and $\wt{H}$ is bounded from below.


For $\phi(t) = \widetilde H(l(t)),$ since $\phi(t)$ is non-increasing, the following limit exists
$$\lim_{t\to\infty} \phi(t)=C\in \R.$$ By the properness of $\wt{H}|_{\mathbb{R}^{E}/ \widehat{\mathbb{R}^{V}}},$ $\{l(t):t\in [0,\infty)\}$ is contained in a compact subset of $\mathbb{R}^{E}/ \widehat{\mathbb{R}^{V}}.$
By the mean value theorem, there is a sequence $\xi_n\in (n,n+1)$ such that
$$-|\wt{K}(l(\xi_n))|^2=\phi'(\xi_n) = \phi(n + 1) - \phi(n) \to 0,\quad n\to\infty.$$
Passing to a subsequence, still denoted by $\xi_n,$
$$l(\xi_n)\to l_\infty,\quad n\to\infty.$$ By the continuity of $\wt{K},$ $\wt{K}(l_\infty)=0.$ By
Theorem~\ref{thm:rigidity}, we have $$l_\infty=\hat{l}.$$ Hence for any neighbourhood $U$ of $\hat{l}$ in $\mathbb{R}^{E}/ \widehat{\mathbb{R}^{V}},$ for sufficiently large $\xi_n,$
$$l(\xi_n)\in U.$$ By Lemma~\ref{lem:strict},
$Hess (\wt{H}|_{\mathbb{R}^{E}/ \widehat{\mathbb{R}^{V}}})_{\hat{l}}$ is positive definite, which implies the critical point $\hat{l}$ is a local attractor of the extended Ricci flow restricted on $\mathbb{R}^{E}/ \widehat{\mathbb{R}^{V}}.$ By Lyapunov's theorem in the ODE theory, the flow $l(t)$ converges to $\hat{l}$ exponentially fast for any initial data. This proves the result.

\end{proof}

{ Now we prove Theorem~\ref{thm:main2}.
\begin{proof}[Proof of Theorem~\ref{thm:main2}] This follows from Theorem~\ref{thm:main1}.
The other statements are well-known results in hyperbolic geometry.

\end{proof}}

In the following we prove Corollary~\ref{cor:main}.
\begin{proof}[Proof of Corollary~\ref{cor:main}] We claim that if there exists a zero-curvature generalized decorated metric $l,$ then there exists a zero-curvature decorated metric $\hat{l}$ with constant length, i.e. $\hat{l}_{i}=\hat{l}_{j}$ for any $i,j\in E.$ Since $(M,\T)$ is edge-transitive and $l=(l_1,\cdots, l_m)$ has zero Ricci curvature, for any permutation $\eta\in S_m,$
$l_\eta:=(l_{\eta(1)}, \cdots, l_{\eta(m)})$ has zero Ricci curvature. Since the set of zero-curvature generalized decorated metrics $K$ consists of critical points of the convex functional $\wt{H},$ $K$ is a convex set in $\R^E.$ Since $\hat{l}:=(\frac{1}{m}\sum_i l_i,\cdots,\frac{1}{m}\sum_i l_i)$ is in the convex hull of $\{l_\eta\}_{\eta\in S_m},$ $\hat{l}\in K.$ This proves the claim.

For the assertion (1), $d_i=d\neq 6$ for any $i\in E.$ We argue by contradiction. Suppose that there is a zero-curvature generalized decorated metric $l,$ then by the claim there exists a zero-curvature decorated metric $\hat{l}$ with constant length. Hence for each edge $e$ and quad $q$ with $e\sim q,$ $\alpha(q)=\frac{\pi}{3}.$ This implies that
$$K_e(\hat{l})=2\pi-d\frac{\pi}{3}\neq 0.$$ This is a contradiction. Hence there exists no zero-curvature generalized decorated metrics, and the extended Ricci flow $l(t)$
diverges to infinity in subsequence by Theorem~\ref{thm:main1}.

For the assertion (2), $d_i=6$ for any $i\in E.$ One can verify that $l=(1,\cdots,1)\in \mathscr{L}(M,\T)$ is a zero-curvature decorated metric. Then result follows from Theorem~\ref{thm:main1}.

\end{proof}

Next, we consider the prescribed Ricci curvature problem.
The curvature map is defined as
\begin{equation}\label{eq:curv}K:\mathscr{L}(M, \mathcal{T})\to \R^E,\ l\mapsto K(l).\end{equation}
General questions are as follows:
For $\overline{K}\in\R^E,$ is there any $l\in \mathscr{L}(M, \mathcal{T})$ such that $K(l)=\overline{K}?$ If there are some, how could one find the set $K^{-1}(\overline{K})?$ The structure of $K^{-1}(\overline{K})$ is determined by Luo and Yang's rigidity theorem,  Theorem~\ref{thm:rigidity}. Namely, if there is $l_0\in \mathscr{L}(M, \mathcal{T})$ such that $K(l_0)=\overline{K},$ then $$K^{-1}(\overline{K}):=\{w+l_0: w\in \R^V\}.$$ For the case $\overline{K}=0,$ it reduces to the problem of zero Ricci curvature as before. To study the general problem, we introduce the following {extended Ricci flow} $l(t)\in \R^E$
\begin{equation*}\label{exkl1}\frac{d }{dt}l(t)=\wt{K}(l(t))-\overline{K},\quad\forall t\geq0.\end{equation*} Note that if there is $l_0\in \mathscr{L}(M, \mathcal{T})$ of zero Ricci curvature, then it is a negative gradient flow of the convex functional
$F_{l_0}.$ Similar results as Theorem~\ref{UNI} and Theorem~\ref{thm:main1} can be derived accordingly. We omit the proofs here.

Define the discrete Laplace operator $\Delta=-\frac{\partial K}{\partial l}$. One may also consider the following combinatorial Calabi flow
$$\frac{dl}{dt}=\Delta K.$$
which is the negative gradient flow of the combinatorial Calabi energy $C=\|K\|^2/2$. Using similar methods as in \cite{[Gthesis],[G]}, one can study the convergence of the flow and the existence of zero-curvature decorated metrics.

At the end, to realize our program about the hyperbolization of 3-manifolds by combinatorial Ricci flow methods, we need further to prove that suitable topological conditions (such as  incompressible, atoroidal) are equivalent to the convergence of the extended combinatorial Ricci flow.


%


\bibliography{decorate}

\begin{thebibliography}{Luo11b}

\bibitem[BPS15]{[BPB]}
Alexander~I. Bobenko, Ulrich Pinkall, and Boris~A. Springborn.
\newblock Discrete conformal maps and ideal hyperbolic polyhedra.
\newblock {\em Geom. Topol.}, 19(4):2155--2215, 2015.

\bibitem[Cho04]{Choi2004}
Young-Eun Choi.
\newblock Positively oriented ideal triangulations on hyperbolic
  three-manifolds.
\newblock {\em Topology}, 43(6):1345--1371, 2004.

\bibitem[CKP01]{CKP}
Henry Cohn, Richard Kenyon, and James Propp.
\newblock A variational principle for domino tilings.
\newblock {\em J. Amer. Math. Soc.}, 14(2):297--346, 2001.

\bibitem[CL03]{[BL]}
Bennett Chow and Feng Luo.
\newblock Combinatorial {R}icci flows on surfaces.
\newblock {\em J. Differential Geom.}, 63(1):97--129, 2003.

\bibitem[CR96]{[CR]}
Daryl Cooper and Igor Rivin.
\newblock Combinatorial scalar curvature and rigidity of ball packings.
\newblock {\em Math. Res. Lett.}, 3(1):51--60, 1996.

\bibitem[FGH20]{[FGH]}
Ke~Feng, Huabin Ge, and Bobo Hua.
\newblock Convergence of curvature flows for hyper-ideal polyhedral metrics.
\newblock {\em arXiv:2009.03731}, 2020.

\bibitem[Ge12]{[Gthesis]}
Huabin Ge.
\newblock Combinatorial methods and geometric equations.
\newblock {\em PhD Thesis, Peking University}, 2012.

\bibitem[Ge18]{[G]}
Huabin Ge.
\newblock Combinatorial {C}alabi flows on surfaces.
\newblock {\em Trans. Amer. Math. Soc.}, 370(2):1377--1391, 2018.

\bibitem[GH20]{[GH]}
Huabin Ge and Bobo Hua.
\newblock 3-dimensional combinatorial {Y}amabe flow in hyperbolic background
  geometry.
\newblock {\em Trans. Amer. Math. Soc.}, 373(7):5111--5140, 2020.

\bibitem[GJS18]{[GJS]}
Huabin Ge, Wenshuai Jiang, and Liangming Shen.
\newblock On the deformation of ball packings.
\newblock {\em arXiv:1805.10573.}, 2018.

\bibitem[Gli05a]{[Gl1]}
David Glickenstein.
\newblock A combinatorial {Y}amabe flow in three dimensions.
\newblock {\em Topology}, 44(4):791--808, 2005.

\bibitem[Gli05b]{[Gl2]}
David Glickenstein.
\newblock A maximum principle for combinatorial {Y}amabe flow.
\newblock {\em Topology}, 44(4):809--825, 2005.

\bibitem[Luo05]{[L]}
Feng Luo.
\newblock A combinatorial curvature flow for compact 3-manifolds with boundary.
\newblock {\em Electron. Res. Announc. Amer. Math. Soc.}, 11:12--20, 2005.

\bibitem[Luo11a]{Luo2011}
Feng Luo.
\newblock A note on complete hyperbolic structures on ideal triangulated
  3-manifolds.
\newblock In {\em Topology and geometry in dimension three}, volume 560 of {\em
  Contemp. Math.}, pages 19--26. Amer. Math. Soc., Providence, RI, 2011.

\bibitem[Luo11b]{[L1]}
Feng Luo.
\newblock Rigidity of polyhedral surfaces, {III}.
\newblock {\em Geom. Topol.}, 15(4):2299--2319, 2011.

\bibitem[LY18]{[LY]}
Feng Luo and Tian Yang.
\newblock Volume and rigidity of hyperbolic polyhedral 3-manifolds.
\newblock {\em J. Topol.}, 11(1):1--29, 2018.

\bibitem[Moi52]{[M]}
Edwin~E. Moise.
\newblock Affine structures in {$3$}-manifolds. {V}. {T}he triangulation
  theorem and {H}auptvermutung.
\newblock {\em Ann. of Math. (2)}, 56:96--114, 1952.

\bibitem[Neu92]{Neumann1992}
Walter~D. Neumann.
\newblock Combinatorics of triangulations and the {C}hern-{S}imons invariant
  for hyperbolic {$3$}-manifolds.
\newblock In {\em Topology '90 ({C}olumbus, {OH}, 1990)}, volume~1 of {\em Ohio
  State Univ. Math. Res. Inst. Publ.}, pages 243--271. de Gruyter, Berlin,
  1992.

\bibitem[Pen87]{[P]}
R.~C. Penner.
\newblock The decorated {T}eichm\"{u}ller space of punctured surfaces.
\newblock {\em Comm. Math. Phys.}, 113(2):299--339, 1987.

\bibitem[Thu79]{[T]}
William~P. Thurston.
\newblock {\em Geometry and topology of 3-manifolds}, volume~42.
\newblock Lecture Notes, Princeton University,
  http://www.msri.org/publications/books/gt3m/, 1979.

\bibitem[Yan19]{TianyuYang}
Tianyu Yang.
\newblock A combinatorial curvature flow for ideal triangulations.
\newblock {\em PhD thesis, The University of Melbourne,
  https://minerva-access.unimelb.edu.au/handle/11343/222445}, 2019.

\end{thebibliography}
\bibliographystyle{alpha}

\end{document}